\newtheorem{theorem}{Theorem}[section]
\newtheorem{lemma}[theorem]{Lemma}
\newtheorem{conjecture}[theorem]{Conjecture}
\newtheorem{proposition}[theorem]{Proposition}
\newtheorem{question}[theorem]{Question}
\newtheorem{definition}[theorem]{Definition}
\newcommand{\R}{\ensuremath{\mathbbm{R}}}
\newcommand{\cA}{\ensuremath{\mathcal{A}}}
\newcommand{\cE}{\ensuremath{\mathcal{E}}}
\newcommand{\cL}{\ensuremath{\mathcal{L}}}
\newcommand{\cM}{\ensuremath{\mathcal{M}}}
\newcommand{\cH}{\ensuremath{\mathcal{H}}}
\newcommand{\PP}{\mathbbm{P}}
\newcommand{\F}{\ensuremath{\mathbbm{F}}}
\newcommand{\I}{\ensuremath{\mathcal{I}}}
\newcommand{\bv}{\ensuremath{\mathbf{v}}}
\newcommand{\bs}{\ensuremath{\mathbf{s}}}
\newcommand{\cP}{\ensuremath{\mathcal{P}}}
\newcommand{\size}[1]{\left| #1 \right|}		
\renewcommand{\geq}{\geqslant}
\renewcommand{\leq}{\leqslant}
\newcommand*\widebar[1]{%
  \hbox{%
    \vbox{%
      \hrule height 0.5pt % The actual bar
      \kern0.5ex%         % Distance between bar and symbol
      \hbox{%
        \kern-0.1em%      % Shortening on the left side
        \ensuremath{#1}%
        \kern-0.1em%      % Shortening on the right side
      }%
    }%
  }%
}
\title{On the minimum degree of minimal Ramsey graphs for multiple colours}
\author{
Jacob Fox\thanks{
    Department of Mathematics,
   Stanford University,
    Stanford, CA 94305.
    Email: {\tt fox@math.mit.edu}.
    Research supported by a Packard Fellowship, by NSF Career Award DMS-1352121, and by a Sloan Foundation Fellowship.}  \and 
Andrey Grinshpun\thanks{
    Department of Mathematics,
    Massachusetts Institute of Technology,
    Cambridge, MA 02139-4307.
    Email: {\tt agrinshp@math.mit.edu}.
    Research supported by a National Physical Science Consortium Fellowship.} \and
Anita Liebenau\thanks{
	Department of Computer Science, University of Warwick, Coventry CV4 7AL, UK. This research 
	was done when the author was affiliated with the Institute of Mathematics, Freie Universit\"at Berlin, 14195 Berlin, 		Germany. Email: {\tt a.liebenau@warwick.ac.uk}. The author was supported by the Berlin Mathematical School. 
	The author would like to thank the Department of Mathematics,
    	Massachusetts Institute of Technology, Cambridge, MA 02139-4307 for its hospitality where this work was 			partially carried out.} 
	\and
Yury Person\thanks{Institute of Mathematics, Goethe-Universit\"at, 60325 Frankfurt am Main, Germany.  Email: {\tt person@math.uni-frankfurt.de}} \and
Tibor Szab\'o\thanks{
Institute of Mathematics, Freie Universit\"at Berlin, 14195 Berlin, Germany. Email: {\tt szabo@math.fu-berlin.de} Research partially supported by DFG within the Research Training Group {\em Methods for Discrete Structures} and by project SZ 261/1-1.}
}
\begin{document}

\maketitle

\begin{abstract}
A graph 
%$G$ is Ramsey for a graph $H$, denoted by $G\rightarrow H$, 
%if every two-colouring of the edges of $G$ contains a 
%monochromatic copy of $H$. More generally, we say 
$G$ is $r$-Ramsey for a graph
$H$, denoted by $G\rightarrow (H)_r$, if every $r$-colouring of the edges 
of $G$ contains a monochromatic copy of $H$. 
The graph $G$ is called $r$-Ramsey-minimal for $H$ if it is $r$-Ramsey 
for $H$ but no proper subgraph of $G$ possesses this property.
Let $s_r(H)$ denote the smallest minimum degree of $G$ over all graphs $G$ that are $r$-Ramsey-minimal for $H$. The study of the parameter $s_2$ was initiated by Burr, 
Erd\H{o}s, and Lov\'{a}sz in 1976 when they showed that for the clique 
$s_2(K_k)=(k-1)^2$. 
In this paper, we study the dependency of $s_r(K_k)$ on $r$ and 
show that, under the condition that $k$ is constant, 
$s_r(K_k) = r^2\cdot \text{polylog} \ r$. We also give an upper bound 
on $s_r(K_k)$ which is polynomial in both $r$ and $k$, 
and we determine $s_r(K_3)$ up to a factor of $\log r$.  
\end{abstract}

%\maketitle

\section{Introduction}

A graph $G$ is $r$-Ramsey for a graph
$H$, denoted by $G\rightarrow (H)_r$, if every $r$-colouring of the edges 
of $G$ contains a monochromatic copy of $H$. 
The fact that, for any number of colours $r$ and every graph $H$, there exists 
a graph $G$ such that $G\rightarrow (H)_r$ is a consequence of Ramsey's 
theorem \cite{R30}. 
Many interesting questions arise when we consider graphs $G$ which are 
minimal with respect to $G\rightarrow (H)_r$.
A graph $G$ is {\em $r$-Ramsey-minimal for $H$} 
(or {\em $r$-minimal for $H$})
if $G \rightarrow (H)_r$, but  $G' \nrightarrow (H)_r$ for any proper subgraph $G' \varsubsetneq G$. 
Let $\cM_r(H)$ denote the family of all graphs $G$ 
that are $r$-Ramsey-minimal with respect to $H$.  
Ramsey's theorem implies that $\cM_r(H)$ is non-empty for all integers $r$ and all finite graphs $H$. 
However, for general $H$, it is still widely open to classify the graphs in $\cM_r(H)$, or even to prove that these graphs have certain properties.

Of particular interest is $H=K_k$, the complete graph on $k$ vertices, and a fundamental problem is to estimate various parameters of graphs $G \in \cM_r(K_k)$, 
that is, of $r$-Ramsey-minimal graphs for the clique on $k$ vertices. The best-studied such parameter is the Ramsey number $R_r(H)$, the smallest number of vertices of any graph in $\cM_r(H)$. Estimating $R_r(K_k)$, or even $R_2(K_k)$, is one of the main open problems in Ramsey theory. 
Classical results of Erd\H{o}s \cite{E47} and Erd\H{o}s and Szekeres \cite{ES35} 
show that $2^{k/2}\leq R_2(k) \leq 2^{2k}$. 
While there have been several improvements on these bounds (see for example~\cite{C09}
and~\cite{Spe75}), the constant factors in the above exponents remain the same.
For multiple colours, the gap between the bounds is larger. Even for the triangle $K_3$,  
the best known upper bound on the $r$-colour Ramsey  number $R_r(K_3)$ 
is of order $2^{O(r \ln r)}$ \cite{w1997}, 
% $R_r(3)\leq r!\cdot 2.67$, 
whereas, from the other side, $R_r(K_3)\geq 2^{\Omega(r)}$ is the best known lower 
bound (see \cite{xxer2004} for the best known constant).

Other properties of $\cM_r(K_k)$ have also been studied: R\"odl and Siggers showed in \cite{rs2008} that, for all 
$k\geq 3$ and $r\geq 2$, there exists a constant $c=c(r,k)>0$ 
such that, for $n$ large enough, there are at least $2^{cn^2}$ non-isomorphic 
graphs $G$ on at most $n$ vertices that are $r$-Ramsey-minimal for the clique $K_k$. 
In particular, $\cM_r(K_k)$ is infinite. Another well-studied parameter is the size Ramsey number $\hat R_r(H)$ of a graph $H$, which is the minimum number of edges of a graph in $\cM_r(K_k)$.  

Interestingly, some extremal parameters of graphs in $\cM_r(K_k)$ 
could be determined exactly when the number of colours is two. 
In this paper, we consider the minimal minimum degree of 
$r$-Ramsey-minimal graphs $s_r(H)$, defined by
\[ s_r(H) := \min_{G \in \cM_r(H)} \delta(G) \]
where $\delta (G)$ denotes the minimum degree of $G$.

It is rather simple to see that, for any graph $H$,
\begin{align}\label{eq:simplebounds}
r(\delta(H)-1) < s_r(H) < R_r(H).
\end{align} 
Indeed, for $r=2$, the proof of the lower bound is included in \cite{fl2006}; 
it generalises easily to more colours. We include a similar argument at the beginning of Section \ref{sec:lowerbdds}.
In \cite{bel1976}, Burr, Erd\H{o}s, and Lov\'asz showed that, 
rather surprisingly, the simple upper bound above is far from optimal when 
$r=2$, namely $s_2(K_k) = (k-1)^2$. 
%On the other hand, $s_2(K_k\cdot K_2) = k-1$,  
%where $K_k\cdot K_2$ denotes the clique with a pendant edge, as we showed recently 
%in \cite{fglps2013a}. 
%This implies in particular that there is a graph $G$ such that 
%$G\rightarrow (K_k)_r$, but $G\nrightarrow (K_k\cdot K_2)_r$. 

In this paper, we study the behaviour of $s_r(K_k)$ as a function of $r$ and $k$. 
We mainly study $s_r(K_k)$ as a function of $r$ with $k$ fixed.
In particular, we determine $s_r(K_3)$ up to a logarithmic factor.
\begin{theorem}\label{ManyColoursTriangle}
There exist constants $c,C>0$ such that for 
all $r\geq 2$, we have  
$$ c r^2 \ln r \leq s_r(K_3) \leq C r^2 \ln^2 r.$$
\end{theorem}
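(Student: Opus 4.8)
The plan is to prove the lower and upper bounds by separate arguments, both of which analyse a minimum-degree vertex of a Ramsey-minimal graph and reduce to a purely combinatorial question about colourings of its neighbourhood.

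For the lower bound, let $G\in\cM_r(K_3)$ with $\delta(G)=s_r(K_3)$, pick a vertex $v$ of degree $d:=\delta(G)$, and set $N:=N(v)$, $G':=G[N]$. Since $G$ is $r$-minimal, the proper subgraph $G-v$ is not $r$-Ramsey, so it admits an $r$-colouring $\psi$ with no monochromatic $K_3$. As $G\to(K_3)_r$, every extension of $\psi$ to the $d$ edges at $v$ creates a monochromatic triangle, which must pass through $v$. Reading this off, the restriction of $\psi$ to $G'$ has each colour class $G'_i$ triangle-free, and $N$ cannot be partitioned into sets $I_1,\dots,I_r$ with $I_i$ independent in $G'_i$; in particular $\sum_i\alpha(G'_i)<d$. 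Using in addition that $G-vw$ is not $r$-Ramsey for each $w\in N$ (which forces every $w$ to have $\Omega(r)$ neighbours inside $G'$, and more generally makes $G'$ critically non-partitionable), I would bound $d$ from below by combining $\sum_i\alpha(G'_i)<d$ with the off-diagonal Ramsey estimate $R(3,t)=\Theta\!\left(t^2/\ln t\right)$: a triangle-free graph on at most $d$ vertices has an independent set of size $\Omega(d/r)$ unless $d=O(r^2\ln r)$, and pushing this through the partition constraint (extracting independent sets colour by colour, with the density information from minimality keeping the iteration on track) yields $d\ge c\,r^2\ln r$.

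For the upper bound I would run this reduction in reverse and then certify minimality. First I would construct the combinatorial core: a graph $G'$ on $d=O(r^2\ln^2 r)$ vertices together with an $r$-edge-colouring $\psi$ whose colour classes $G'_1,\dots,G'_r$ are pairwise edge-disjoint, each triangle-free, and each with independence number strictly less than $d/r$ --- so that $\sum_i\alpha(G'_i)<d$ and $V(G')$ is not partitionable into independent sets, one per class --- and I would moreover arrange that $G'$ becomes partitionable after the deletion of any single vertex. (One obtains the $G'_i$ from near-extremal triangle-free Ramsey graphs, for instance via the triangle-free process run in $r$ edge-disjoint rounds and analysed with the Lov\'asz Local Lemma; the extra logarithmic factor relative to the lower bound is the room needed to keep all $r$ colour classes simultaneously triangle-free, edge-disjoint, and of small independence number.) Now form $G$ by adding a new vertex $v$ joined to all of $V(G')$, fixing a set of edges $f_1,\dots,f_r$ that any good colouring must colour rainbow (built from negative signal senders), and attaching for each edge $ww'$ of $G'$ a positive signal sender linking $ww'$ to $f_{\psi(ww')}$; standard large-girth/distance properties of signal-sender gadgets ensure no unintended monochromatic triangle is created. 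In any $r$-colouring of $G$ with no monochromatic triangle, the senders force $G'$ to carry, up to relabelling, the colouring $\psi$, and then the edges at $v$ cannot be coloured, so $G\to(K_3)_r$; on the other hand $G-vw\not\to(K_3)_r$ for every $w\in N(v)$, since after deleting $w$ the core is partitionable and the resulting good colouring extends to all of $G$. Hence every $r$-Ramsey-minimal subgraph of $G$ retains all $d$ edges at $v$, whence $s_r(K_3)\le\deg_G(v)=d=O(r^2\ln^2 r)$.

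I expect two main obstacles. On the lower-bound side, squeezing out the logarithmic factor is delicate: a direct Local Lemma argument on a uniformly random neighbourhood colouring only yields $s_r(K_3)=\Omega(r^2)$, and one genuinely needs the triangle-free structure of the colour classes, the criticality coming from minimality, and the precise order of magnitude of $R(3,t)$ in order to reach $\Omega(r^2\ln r)$. On the upper-bound side, the crux is the explicit or probabilistic construction of the $r$ edge-disjoint triangle-free graphs with independence number below $d/r$ on only $O(r^2\ln^2 r)$ common vertices; the signal-sender engineering around it is routine but must be carried out carefully so as not to introduce spurious monochromatic triangles or to disturb the degree of $v$.
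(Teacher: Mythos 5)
Your overall architecture is the paper's: the neighbourhood of a minimum-degree vertex reduces everything to colour patterns (this is exactly the equivalence $s_r(K_3)=P_r(2)$ of Theorem~\ref{fundamental}, with the signal-sender/apex construction for the upper direction), the upper bound comes from packing $r$ edge-disjoint triangle-free graphs of small independence number, and the lower bound from independence numbers of the triangle-free colour classes. But there is a genuine gap on the lower-bound side: the step ``$N$ cannot be partitioned into sets $I_i$ independent in $G'_i$; in particular $\sum_i\alpha(G'_i)<d$'' is not a valid implication, and your whole derivation $r\cdot f_{2,3}(d)\le\sum_i\alpha(G'_i)<d$ rests on it. Counterexample: let $V=X\cup Y$ with $|X|=|Y|=m$ even, let $G'_1$ be the complete bipartite graph between $X$ and $Y$ and $G'_2$ a perfect matching inside $X$ together with one inside $Y$; both are triangle-free and edge-disjoint, no partition into $I_1,I_2$ exists (any $I_1$ independent in $G'_1$ lies in one side, so $V\setminus I_1$ contains the whole other side and hence a $G'_2$-edge), yet $\alpha(G'_1)+\alpha(G'_2)=2m=|V|$. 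The correct route is sequential rather than simultaneous: remove a \emph{maximum} independent set $I$ of one colour class and observe that the remaining pattern on $V\setminus I$ still forces a strongly monochromatic edge for the other $r-1$ colours, which gives the recursion $P_r(2)\ge P_{r-1}(2)+f_{2,3}(P_r(2))$ (Proposition~\ref{cor:LowerBddWithErdoesRogers}); combined with Shearer's bound $f_{2,3}(n)=\Omega(\sqrt{n\ln n})$, the simple bound of Lemma~\ref{simpleBdd}, and an induction on $r$, this yields $\Omega(r^2\ln r)$. That induction is exactly the work your phrase ``pushing this through the partition constraint'' elides; also the input you invoke from edge-minimality (every $w\in N$ having $\Omega(r)$ neighbours in the pattern) is neither established nor needed.

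On the upper bound, the sender construction is fine (and you are overcomplicating the minimality step: you only need that the sender graph without the apex $v$ is not $r$-Ramsey, so that every Ramsey-minimal subgraph contains $v$ and its minimum degree is at most $\deg(v)=n$; arranging that the pattern becomes partitionable after deleting any single vertex is unnecessary and unverified for your construction). The real missing ingredient is why the packing can be carried out $r$ times. Saying ``run the triangle-free process in $r$ edge-disjoint rounds'' is not known to work --- the paper explicitly leaves this as a conjecture that would improve the bound to $\Theta(r^2\ln r)$. What the paper actually proves (Lemma~\ref{lem:LLL}) is a Local Lemma statement relative to an arbitrary host graph $H$ in which every $m$-set spans at least $\tbinom{m}{2}/2$ edges: it extracts a triangle-free subgraph with no independent $m$-set that moreover uses at most $\tbinom{m}{2}/(2r)$ edges inside \emph{every} $m$-set. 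This $e_{\max}$ control is the invariant that keeps all later rounds feasible after deleting the previously chosen graphs; without stating and proving some such bookkeeping, your iteration has no reason to survive past the first few colours, and this is where the extra $\ln r$ factor genuinely comes from.
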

One can show that $s_r(K_k)\geq s_{r-1}(K_k)$  
(this follows from a stronger statement, cf.~Theorem~\ref{fundamental} and 
Proposition~\ref{cor:LowerBddWithErdoesRogers}). 
However, it is not clear
that $s_r(K_k) \geq s_r(K_{k-1})$.
% removing a vertex of minimum degree of some $r$-minimal graph $G$ 
%leaves a graph $G'$ which is not $r$-Ramsey for $K_k$; 
%however, $G'\rightarrow (K_{k-1})_r$ and, moreover, it might be far from being 
%$r$-minimal for $K_{k-1}$.  
Therefore, the lower bound on $s_r(K_3)$ does not necessarily imply a similar lower 
bound on $s_r(K_{k})$.
We can in fact only prove a super-quadratic lower bound on
$s_r(K_k)$ that is slightly weaker. 
\begin{theorem}\label{ManyColoursGeneral}
For all $k\geq 4$ there exist constants $c=c(k), C=C(k) >0$ such that, for all $r \geq 3$, 
$$ c\,  r^2\, \frac{\ln r}{\ln \ln r}
 \leq s_r(K_k) 		
	\leq C\, r^2(\ln r)^{8(k-1)^2}.$$
\end{theorem}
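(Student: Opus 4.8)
The plan is to prove the lower and upper bounds separately, in each case reducing the clique problem to a cleaner auxiliary question. For the lower bound $s_r(K_k) \geq c\, r^2 \ln r/\ln\ln r$, I would follow the strategy one expects was used for the triangle case (Theorem~\ref{ManyColoursTriangle}), namely combining the monotonicity fact $s_r(K_k)\geq s_{r-1}(K_k)$ (Theorem~\ref{fundamental}, Proposition~\ref{cor:LowerBddWithErdoesRogers}) with a local analysis near a minimum-degree vertex. Suppose $G\in\cM_r(K_k)$ has a vertex $v$ of degree $d=\delta(G)$. Minimality means there is an edge $e$ whose deletion destroys the Ramsey property, so there is an $r$-colouring of $G-e$ with no monochromatic $K_k$; extending it to $G$ forces a monochromatic $K_k$ through $e$. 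Standard ``signal-sender''/colour-pattern arguments (as in Burr--Erd\H{o}s--Lov\'asz) let one assume the neighbourhood $N(v)$ receives few colour classes, and then one needs that some colour class on the $d$ edges at $v$ still spans a graph that arrows $K_{k-1}$ in fewer colours; this is where the Erd\H{o}s--Rogers-type input from Proposition~\ref{cor:LowerBddWithErdoesRogers} enters. Pigeonholing the $d$ edges at $v$ into $r$ colours gives one class of size $\geq d/r$, and iterating/bootstrapping the $K_{k-1}$ structure inside that class together with the recursive bound $s_{r-1}(K_k)$ yields $d = \Omega(r\cdot s_{r'}(\cdot))$ for an appropriate smaller number of colours $r'$; tracking the loss through the recursion produces the extra $\ln r/\ln\ln r$ factor over the trivial $r^2$. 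The main obstacle here is arranging the colour bookkeeping so that the recursion closes with only a $\ln\ln r$ loss in the denominator rather than a full $\ln r$, which is presumably exactly why the $K_k$ bound is stated as slightly weaker than the $K_3$ bound.

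For the upper bound $s_r(K_k)\leq C\, r^2(\ln r)^{8(k-1)^2}$, the plan is to exhibit a graph $G$ that is $r$-Ramsey for $K_k$, has minimum degree $O(r^2\,\polylog\, r)$, and then pass to an $r$-Ramsey-minimal subgraph — but passing to a subgraph can only decrease the minimum degree, so it suffices to build $G$ with the right minimum degree and the arrowing property, plus the structural guarantee (as in the Burr--Erd\H{o}s--Lov\'asz construction) that \emph{some} minimal subgraph retains a vertex of that degree. Concretely I would use the ``gadget'' approach: take a vertex $v$, attach to it $t = \Theta(r^2\,\polylog\,r)$ pendant-like gadgets, each gadget being a bounded-size graph $F$ that is ``$K_{k-1}$-determined'' in the sense that in any $r$-colouring it forces a monochromatic $K_{k-1}$ in a prescribed small set of vertices adjacent to $v$; arranging the gadgets so that any $r$-colouring produces, among the $t$ edges at $v$, many edges of one colour whose other endpoints are forced to host a monochromatic $K_{k-1}$ in that same colour, completing a $K_k$ through $v$. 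The number of gadgets needed is governed by an off-diagonal-Ramsey-type bound: to force one colour class at $v$ to be large enough to contain a suitable $K_{k-1}$-forcing configuration, one needs $t \gtrsim r\cdot R$, where $R$ is the size of the configuration each gadget must reach, and unwinding $R$ through the $k$-fold nesting of such forcing gives the $(\ln r)^{8(k-1)^2}$ factor. The delicate point is ensuring minimality does not kill the low-degree vertex: one shows, exactly as for $s_2(K_k)=(k-1)^2$, that the gadgets can be made ``rigid'' so that no minimal Ramsey subgraph can afford to delete an edge at $v$, hence $v$ survives with its small degree.

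I expect the harder of the two directions to be the upper bound, and within it the harder sub-task is constructing the $K_{k-1}$-forcing gadgets $F$ of \emph{constant} size (depending only on $k$, not $r$) that behave correctly under $r$ colourings and that can be glued together without creating unintended monochromatic cliques or unintended cheap ways to recolour. This is the step where one must invoke signal senders / positive and negative senders (the standard Burr--Erd\H{o}s--Lov\'asz machinery generalised to $r$ colours, presumably developed earlier in the paper around Theorem~\ref{fundamental}), verify they compose, and carefully count so the polylogarithmic exponent comes out as $8(k-1)^2$ rather than something exponential in $k$. The lower bound, by contrast, should follow the triangle argument closely, with the only new ingredient being the substitution of the Erd\H{o}s--Rogers-style bound for $K_{k-1}$-subgraphs of $K_k$-free graphs in place of the elementary independence-number bound available for triangles; that substitution is what forces the slightly weaker $\ln r/\ln\ln r$ savings.
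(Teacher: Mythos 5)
There is a genuine gap on both sides, and in both cases it is the quantitative heart of the argument that is missing. For the lower bound, your sketch never closes: pigeonholing the $d$ edges at a minimum-degree vertex into $r$ colours and ``bootstrapping'' to $d=\Omega(r\cdot s_{r'}(\cdot))$ does not give anything super-quadratic, and signal senders play no role in this direction. The paper's route is to pass to the packing number via the easy inequality $s_r(K_k)\geq P_r(k-1)$ (Lemma~\ref{fundamentalLower}, which only uses the vertex-colouring of $N(v)$ induced by the edge colours at $v$), and then the entire content is the recursion of Proposition~\ref{cor:LowerBddWithErdoesRogers}, $P_r(k-1)\geq P_{r-1}(k-1)+f_{k-1,k}\bigl(P_r(k-1)\bigr)$, fed with the nontrivial Erd\H{o}s--Rogers lower bound $f_{k-1,k}(n)=\Omega\bigl(\sqrt{n\ln n/\ln\ln n}\bigr)$ of Dudek and Mubayi~\cite{dm2012}, followed by an induction on $r$ giving $P_r=\Omega\bigl(r^2\ln r/\ln\ln r\bigr)$. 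This external input is exactly where the $\ln r/\ln\ln r$ comes from; your closing remark that the triangle case uses an ``elementary independence-number bound'' is also off --- the elementary $\sqrt{n}$ bound only yields $\Omega(r^2)$, and the $\ln r$ in Theorem~\ref{ManyColoursTriangle} needs Shearer's theorem.

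For the upper bound, the proposed gadgets of size depending only on $k$ cannot exist: a graph forcing a monochromatic $K_{k-1}$ in \emph{every} $r$-edge-colouring must have at least $R_r(K_{k-1})$ vertices, which grows with $r$. The signal senders (already packaged into Theorem~\ref{fundamental}) only constrain colourings with no monochromatic $K_k$, and their sole job is to make the colour pattern on $N(v)$ respected; all quantitative work sits in bounding $P_r(k-1)$. Concretely, by Lemma~\ref{lemma:critical} one must exhibit $r$ \emph{pairwise edge-disjoint} $(n,r,k-1)$-critical graphs (each $K_k$-free, with every $n/r$-subset containing a $K_{k-1}$) on a common vertex set of size $n=O\bigl(r^2(\ln r)^{8(k-1)^2}\bigr)$; the exponent comes from the Dudek--Retter--R\"odl bound $f_{k-1,k}(n)=O\bigl((\ln n)^{4(k-1)^2}\sqrt{n}\bigr)$~\cite{drr2013}, not from any ``$k$-fold nesting'' or off-diagonal Ramsey count. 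The genuinely new step, absent from your plan, is the packing itself: Lemma~\ref{ManyColoursGadget2} randomly colours the lines of an affine plane of order $q=\Theta\bigl(k(\ln r)^{4(k-1)^2}r\bigr)$ with about $q/\log^2 q>4r$ colours, notes each colour class has the distribution of the random hypergraph in the DRR construction and is ``good'' with probability at least $1/4$, so at least $r$ classes simultaneously support critical graphs, and edge-disjointness is automatic because distinct lines meet in at most one point. Without such a packing the colour pattern in $N(v)$ is not realisable by a single edge-colouring and the bound on $s_r(K_k)$ does not follow. (Minor point: one does not need that no edge at $v$ can be deleted, only that $v$ itself lies in every $r$-minimal subgraph, which holds because $G'-v$ is not Ramsey; deleting edges at $v$ only lowers its degree.)
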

%That is, when $t$ is a fixed constant, then $s_r(K_t) = \Theta(r^2\, \mbox{polylog}(r))$.
The proof of the upper bounds in Theorems~\ref{ManyColoursTriangle} and~\ref{ManyColoursGeneral} 
are of asymptotic nature and require $r$ to be rather large.
Moreover, the exponent of the $(\ln r)$-factor in the latter upper bound depends 
on the size of the clique.  
Therefore, we also prove an upper bound on $s_r(K_k)$ which is 
polynomial both in $r$ and in $k$ and is applicable for small values 
of $r$ and $k$. 
\begin{theorem}\label{ManyColoursGeneralWeak}
For $k\geq 3$, $r\geq 3$, 
$ s_r(K_k) \leq 8 (k-1)^6 r^3.$
\end{theorem}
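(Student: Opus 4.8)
The plan is to construct explicitly, for every $k \geq 3$ and $r \geq 3$, a graph $G$ that is $r$-Ramsey for $K_k$ and whose minimum degree is at most $8(k-1)^6 r^3$, and then pass to an $r$-Ramsey-minimal subgraph. The key point is that passing to a subgraph can only decrease the minimum degree, so it suffices to exhibit \emph{any} $r$-Ramsey graph $G$ for $K_k$ in which \emph{some} vertex has degree at most $8(k-1)^6 r^3$; that vertex survives (with no larger degree) in a minimal subgraph provided we are slightly careful, or more robustly, we build $G$ so that it is vertex-transitive or at least so that \emph{every} vertex has small degree, in which case every $r$-minimal subgraph automatically has small minimum degree.

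I would follow the Burr–Erdős–Lovász paradigm that gives $s_2(K_k) = (k-1)^2$, generalised to $r$ colours. The idea there is a "gadget" construction: one takes many disjoint copies of a carefully chosen building block and glues them at a common structure (a "signal sender" / "determiner" type gadget) so that in any $r$-colouring, the colours appearing on the gadgets are forced into a configuration that produces a monochromatic $K_k$, while keeping degrees controlled. Concretely, I expect the construction to involve: (i) a base graph that is $r$-Ramsey for $K_k$ on a bounded number of vertices — this exists by Ramsey's theorem, but its minimum degree is huge, on the order of $R_r(K_k)$; (ii) replacing high-degree vertices by low-degree gadgets that "simulate" the same Ramsey behaviour. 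A clean way to organise this is via a product-type or blow-up construction where a single low-degree vertex is connected only to one small gadget per colour class, so its degree is roughly $r$ times the size of a gadget, and each gadget has size polynomial in $k$. Getting the gadget size down to $O((k-1)^4 r^2)$ or so, times the factor $r$ from the number of colours, would yield the claimed $O((k-1)^6 r^3)$ bound; the polynomial-in-$k$ control (as opposed to the $R_r(K_k)$-type bound) is exactly what distinguishes this theorem from the trivial upper bound in \eqref{eq:simplebounds}.

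The main obstacle I anticipate is the simultaneous control of two competing requirements: the graph must \emph{force} a monochromatic $K_k$ under every $r$-colouring (an "expansion"/richness condition that tends to push degrees up), yet it must have a vertex — ideally all vertices — of degree polynomial in $k$ and $r$ (a sparsity condition). Reconciling these is the heart of the Burr–Erdős–Lovász argument, and the multicolour version requires a gadget whose "signal-sending" property is robust to $r$ colours rather than $2$; designing such a gadget with only polynomially many vertices in $k$ (rather than Ramsey-number-many) is the crux. I would build it by taking a complete multipartite-like skeleton on $O(k)$ parts of size $O((k-1)r)$ or $O((k-1)^2 r)$, arranged so that any $r$-colouring either produces a monochromatic $K_k$ inside one part's neighbourhood or else "releases" a forced colour that is fed into the next gadget in a chain; a pigeonhole over the $r$ colours and the $\binom{k}{2}$ edge-slots of a would-be clique, combined with iterating the gadget $O((k-1)^2)$ times along the chain, should close the argument. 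The final degree count then multiplies the per-gadget degree $O((k-1)^4 r^2)$ by the number of colours $r$ (for the $r$ parallel attachment points) to reach $8(k-1)^6 r^3$, and I would verify the explicit constant $8$ by bookkeeping the $(k-1)^2$ iterations, the $(k-1)^2$-sized parts, the $(k-1)^2$ factor from part multiplicity, and the $r^3$ from colours, each contributing a factor whose product is at most $8$ after the polynomial pieces are separated out.
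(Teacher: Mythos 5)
Your overall frame (Burr--Erd\H{o}s--Lov\'asz signal senders, plus a distinguished low-degree vertex that survives into a minimal subgraph) is the right starting point, but two essential pieces are missing, and one of your fallbacks is actually impossible. The ``more robust'' option of building an $r$-Ramsey graph for $K_k$ in which \emph{every} vertex has degree polynomial in $k$ and $r$ cannot work: any graph that is even $2$-Ramsey for $K_k$ has chromatic number at least $R_2(K_k)$ (Burr--Erd\H{o}s--Lov\'asz), hence maximum degree exponential in $k$. So one must work with a single special vertex $v$, and the way the paper forces $v$ into every minimal subgraph is to build (via the signal senders of Theorem~\ref{bel}) a graph $G$ that is \emph{not} $r$-Ramsey but becomes $r$-Ramsey once $v$ is joined to a prescribed vertex set $V$; then every minimal subgraph contains $v$ and the minimum degree is at most $\deg(v)=|V|$. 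This is exactly the reduction $s_r(K_k)=P_r(k-1)$ of Theorem~\ref{fundamental}: what must live inside $N(v)$ is a $K_k$-free colour pattern $G_1,\dots,G_r$ on $n$ vertices such that every $r$-colouring of the \emph{vertices} contains a strongly monochromatic $K_{k-1}$. Your proposal never isolates this vertex-colouring problem, and without it the ``forcing'' requirement you describe stays at the level of intention.

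The second, and main, gap is the construction that realises this colour pattern with $n\leq 8(k-1)^6r^3$, i.e.\ $r$ pairwise edge-disjoint graphs on $n$ vertices, each $K_k$-free with $(k-1)$-independence number below $n/r$ (Lemma~\ref{lemma:critical} and Lemma~\ref{ManyColoursGadget1}). Your sketch --- a complete multipartite-like skeleton on $O(k)$ parts with a pigeonhole/chain of gadgets --- does not plausibly deliver this: in a complete multipartite-type graph the union of all but one part contains no $K_{k-1}$... in fact any $k-2$ parts form a $K_{k-1}$-free set of size a constant fraction of $n$, vastly exceeding $n/r$, so the required ``criticality'' fails badly, and the ``release a forced colour into the next gadget'' mechanism is not specified in any verifiable way. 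The paper's proof instead takes $q$ the smallest prime power with $(k-1)^2r\leq q$ (so $q\leq 2(k-1)^2r$ by Bertrand), works on $V=\F_q^3$ with $n=q^3\leq 8(k-1)^6r^3$, assigns to each colour $\lambda$ the family of lines whose slopes lie on the $\lambda$-moment curve (any three such lines are triangle-free, and families for distinct $\lambda$ are edge-disjoint), randomly partitions each line into $k-1$ parts carrying a Tur\'an graph, and shows by a first-moment/union-bound calculation that with positive probability every set of $\lfloor n/r\rfloor$ vertices contains a $K_{k-1}$ in each colour. In particular the constant $8$ comes from cubing the Bertrand factor $2$ in $q\leq 2(k-1)^2 r$, not from bookkeeping iterated gadgets, and the degree of $v$ is simply $n$, not ``$r$ attachment points times a gadget size.'' Without some construction of this strength your argument does not reach a bound polynomial in both $k$ and $r$.
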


\textbf{Tools.}
We give an overview of the tools we use to prove bounds on $s_r(K_k)$. The first step will be to reduce finding $s_r(K_k)$ to a simpler problem. %, following the approach of \cite{bel1976}. 
We call a sequence of pairwise edge-disjoint graphs $G_1,\ldots,G_r$ on the
same vertex set $V$ a {\em colour pattern} on $V$.
For a graph $H$, a colour pattern $G_1,\ldots,G_r$ is called {\em $H$-free}
if none of the $G_i$ contains $H$ as a subgraph.
A graph with coloured vertices and edges is called {\em strongly monochromatic} if 
all its vertices and edges have the same colour.

\begin{definition}
The {\em $r$-colour  $k$-clique packing number}, $P_r(k)$, is the smallest integer $n$ such that there exists a $K_{k+1}$-free colour pattern $G_1,\ldots,G_r$  on an $n$-element
vertex set $V$ 
with the property that any $[r]$-colouring of $V$ contains a strongly 
monochromatic $K_k$. % some $i \in [r]$ so that $G_i$ contains a $K_k$ on the vertices of colour $i$.
\end{definition}
While Burr, Erd\H{o}s, and Lov\'asz~\cite{bel1976} do not explicitly define $P_2(k)$ in their 
proof of $s_2(K_k)=(k-1)^2$, they do essentially show that $s_2(K_{k})=P_2(k-1)$ 
and it is then not hard to see that $P_2(k-1)=(k-1)^2$. 
Here we generalise their result to an arbitrary number $r$ of colours.
\begin{theorem}\label{fundamental}
For all integers $r,k\geq 2$ we have $s_r(K_{k+1})=P_r(k)$.
\end{theorem}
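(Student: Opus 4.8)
The plan is to prove the equality by two inequalities, each obtained by an explicit construction that translates between a Ramsey-minimal graph for $K_{k+1}$ and a clique-packing colour pattern. Throughout, the key idea is the standard ``signal-sender'' / gadget philosophy of Burr--Erd\H{o}s--Lov\'asz, adapted to $r$ colours: a vertex of minimum degree in a Ramsey-minimal graph plays the role of the ground set $V$ in a clique-packing pattern, and its link (the graph induced on its neighbourhood together with the colouring forced on it) encodes an $H$-free colour pattern that still forces a strongly monochromatic clique.

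For the inequality $s_r(K_{k+1}) \le P_r(k)$, I would start from an optimal $K_{k+1}$-free colour pattern $G_1,\dots,G_r$ on a vertex set $V$ with $|V| = P_r(k)$ having the strong-monochromatic-$K_k$ property. From this I would build a graph $G$ that is $r$-Ramsey for $K_{k+1}$ and in which some vertex $v$ has degree exactly $|V|$: take $v$ joined to all of $V$, and attach to each edge-colour-class $G_i$ enough ``positive signal senders'' and to $V$ itself enough external Ramsey gadgets so that (a) in any $r$-colouring of $G$, if the edges from $v$ to $V$ avoid creating a monochromatic $K_{k+1}$ through $v$, then the induced colouring of $V$ is a genuine $[r]$-vertex-colouring to which the pattern property applies, forcing a strongly monochromatic $K_k$ in some $G_i$, which together with $v$ gives a monochromatic $K_{k+1}$; and (b) the $K_{k+1}$-freeness of the pattern guarantees one can colour things to avoid a monochromatic $K_{k+1}$ not using $v$, so $v$ really is forced to have low degree. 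Finally pass to a Ramsey-minimal subgraph $G' \subseteq G$ containing $v$ with all its edges; since deleting edges only lowers degrees, $\delta(G') \le \deg_{G'}(v) \le |V| = P_r(k)$.

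For the reverse inequality $s_r(K_{k+1}) \ge P_r(k)$, I would take a graph $G \in \cM_r(K_{k+1})$ achieving $\delta(G) = s_r(K_{k+1})$, fix a vertex $v$ of minimum degree, and let $V = N(v)$, so $|V| = s_r(K_{k+1})$. Because $G$ is Ramsey-minimal, for each edge $e$ incident to $v$ there is a colouring $c_e$ of $G - e$ with no monochromatic $K_{k+1}$; combining these colourings appropriately (this is where one must be careful) produces, for the graph $G[V]$ together with the colours, an $H$-free structure, and one defines the pattern $G_1, \dots, G_r$ on $V$ by letting $G_i$ consist of the edges of $G[V]$ that can be coloured $i$ while extending to a good colouring of all of $G$. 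One checks: no $G_i$ contains $K_{k+1}$, because a monochromatic $K_{k+1}$ in colour $i$ inside $V$ would, together with $v$ coloured $i$ on its $k+1$ spokes, contradict the choice of the colourings; and any $[r]$-colouring $\varphi$ of $V$ yields a strongly monochromatic $K_k$, because otherwise one could colour the spokes from $v$ according to $\varphi$ and extend (using Ramsey-minimality of each $G - e$) to a good colouring of $G$, contradicting $G \to (K_{k+1})_r$. Hence $G_1, \dots, G_r$ witnesses $P_r(k) \le |V| = s_r(K_{k+1})$.

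The main obstacle, in both directions, is the bookkeeping that makes the ``gluing'' of per-edge colourings into a single coherent pattern actually work: one must ensure that the graphs $G_i$ are pairwise edge-disjoint, that the forcing argument genuinely uses only the colouring of $V$ (and not some interaction with the rest of $G$), and — for the upper bound direction — that the auxiliary signal-sender gadgets used to rigidify the colouring of $V$ can themselves be made $K_{k+1}$-free and do not accidentally raise the minimum degree or create unwanted monochromatic cliques. Constructing $r$-colour positive and negative signal senders with the required girth/clique-freeness properties (the $r$-colour analogue of the Burr--Erd\H{o}s--Lov\'asz gadgets, presumably established in a preceding lemma of the paper) is the technical heart; once those are in hand, the two inequalities above are a matter of assembling the pieces.
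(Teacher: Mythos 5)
Your upper-bound direction is essentially the paper's own argument: build a gadget graph around the optimal pattern using $r$-colour positive and negative signal senders (black-boxed from R\"odl--Siggers) so that every $K_{k+1}$-free colouring makes each $G_i$ monochromatic in a distinct colour, attach a new vertex $v$ joined exactly to $V$, observe that the resulting graph is $r$-Ramsey while the graph minus $v$ is not, and conclude that any Ramsey-minimal subgraph contains $v$ and hence has minimum degree at most $\deg(v)=P_r(k)$. (One small correction: the senders themselves cannot be $K_{k+1}$-free and need not be; what the paper needs is that they are not $r$-Ramsey and that copies of $K_{k+1}$ cannot straddle a sender and the rest of the construction, which is arranged via the distance condition on the signal edges.)

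The genuine gap is in your lower-bound direction. You define $G_i$ as the edges of $G[V]$ that ``can be coloured $i$ while extending to a good colouring of all of $G$'' --- but $G\rightarrow(K_{k+1})_r$, so no good colouring of all of $G$ exists and this definition is vacuous. If you instead read it with respect to the per-edge colourings $c_e$ of $G-e$, two problems appear: an edge of $G[V]$ may receive different colours in different $c_e$'s, so the classes $G_1,\dots,G_r$ need not be pairwise edge-disjoint (hence do not form a colour pattern at all), and your forcing step --- ``colour the spokes from $v$ according to $\varphi$ and extend, using Ramsey-minimality of each $G-e$, to a good colouring of $G$'' --- is exactly the merging of incompatible colourings that cannot be carried out; knowing each $G-e$ has a good colouring gives no way to produce one colouring of $G-v$ compatible with all spokes simultaneously. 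The missing idea is to use minimality to delete the whole vertex $v$ rather than single edges: since $G-v$ is a proper subgraph of a minimal graph, it admits one fixed $r$-colouring $\chi$ with no monochromatic $K_{k+1}$, and you take $G_i$ to be the colour-$i$ class of $\chi$ restricted to $N(v)$. These are automatically edge-disjoint and $K_{k+1}$-free, and for any vertex colouring of $N(v)$ you extend this single $\chi$ by giving the spoke $vu$ the colour of $u$; Ramseyness of $G$ then yields a monochromatic $K_{k+1}$, which must contain $v$, and its remaining $k$ vertices form a strongly monochromatic $K_k$ in the pattern, giving $|N(v)|=s_r(K_{k+1})\geq P_r(k)$.
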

The lower bound $s_r(K_{k+1}) \geq P_r(k)$ is not difficult to derive from the definitions. 
The upper bound $s_2(K_{k+1}) \leq P_2(k)$ follows from a powerful theorem of \cite{bel1976}. We use later generalisations of this theorem by Burr, Ne\v{s}et\v{r}il, and R\"{o}dl \cite{bnr1984} and, recently in 2008, by R\"{o}dl and Siggers \cite{rs2008} to derive $s_r(K_{k+1}) \leq P_r(k)$ for arbitrary $r\geq 2$. 

The problem then becomes to obtain bounds on $P_r(k)$. %, which we will see closely 
We will see that $P_r(k)$ 
relates closely to the so-called {\em Erd\H os-Rogers function}, %. First studied by Erd\H os and 
%Gallai~\cite{eg1961}, the function was formally defined 
which was first studied by Erd\H os and Rogers~\cite{er1962} in 1962.
We will be particularly concerned with the special case of the Erd\H{o}s-Rogers function, denoted by $f_{k,k+1}(n)$, which is defined to be the largest integer $\alpha$ so that in any $K_{k+1}$-free graph on $n$ vertices, there must be a vertex-set of size $\alpha$ that contains no $K_k$. 
%We use lower and upper bounds on the Erd\H{o}s-Rogers function to get lower and upper bounds on $P_r(k)$. 
For our bounds, we will rely heavily on the modern analysis of $f_{k, k+1}$ found in
\cite{dm2012, drr2013, dr2011, s1983}. 
In Section~\ref{sec:lowerbdds}, we will see that essentially
$P_r(k)= \Omega \left(r(f_{k,k+1}(r))^2\right)$, so lower bounds on $f_{k,k+1}$
directly translate to lower bounds on $P_r(k)$. 
In Section~\ref{sec:packings}, we obtain upper bounds on $P_r(k)$ by packing $r$ 
graphs, each giving good upper bounds on $f_{k,k+1}$, into the same vertex set.

\textbf{Organisation.} 
%We are interested how the quantity of $s_r(K_k)$ changes when 
%$r\rightarrow \infty$, and mostly assume that $k$ is constant. 
In the next section, we prove that $s_r(K_{k+1}) = P_r(k)$. In Section \ref{sec:lowerbdds}, we prove the lower bounds on $P_r(k)$ in 
Theorem \ref{ManyColoursTriangle} and Theorem \ref{ManyColoursGeneral}. In Section \ref{sec:packings}, we prove the upper bounds in Theorem \ref{ManyColoursTriangle}, Theorem \ref{ManyColoursGeneral}, and Theorem \ref{ManyColoursGeneralWeak}.
%In Section \ref{sec:gadgets}, we elaborate on a discussion of the tools used to prove $s_r(K_{k+1}) \leq P_r(k)$.
We close this paper with some concluding remarks. 

\section{Passing to $P_r(k)$}
In this section we conclude Theorem~\ref{fundamental} from Lemmas~\ref{fundamentalLower} and \ref{fundamentalUpper}.   
%The inequality $s_r(K_{k+1}) \geq P_r(k)$, as observed by essentially the same argument in the $r=2$ case in \cite{bel1976}.
\begin{lemma}\label{fundamentalLower}
For all $r,k \geq 1$, we have $s_r(K_{k+1}) \geq P_r(k)$.
\end{lemma}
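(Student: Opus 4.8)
The plan is to take a graph $G$ that is $r$-Ramsey-minimal for $K_{k+1}$ and has minimum degree exactly $s_r(K_{k+1})$, fix a vertex $v$ of minimum degree, and use the minimality of $G$ to extract from the neighbourhood $N(v)$ a $K_k$-clique packing pattern that witnesses $P_r(k) \leq \deg(v) = s_r(K_{k+1})$. First I would invoke minimality: since $G - v$ (or more precisely $G$ with one edge at $v$ removed) is no longer $r$-Ramsey for $K_{k+1}$, there is an $r$-colouring $\chi$ of the edges of $G$ in which the only monochromatic copies of $K_{k+1}$ all pass through the deleted edge, hence through $v$. After restricting attention to the colour classes inside $N(v)$, define the colour pattern $G_1, \dots, G_r$ on the vertex set $N(v)$ by letting $G_i$ consist of the edges of colour $i$ inside $N(v)$; these are pairwise edge-disjoint graphs on a common vertex set of size $\deg(v)$.

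The two things to check are that this pattern is $K_{k+1}$-free and that every $[r]$-colouring of $N(v)$ produces a strongly monochromatic $K_k$. For the first: if some $G_i$ contained a $K_{k+1}$, that would be a monochromatic $K_{k+1}$ of colour $i$ inside $G$ not using $v$, contradicting the choice of $\chi$; so the pattern is $K_{k+1}$-free. For the second: given any assignment $\psi \colon N(v) \to [r]$, colour the edge $uv$ with $\psi(u)$ for each neighbour $u$, keeping the colouring $\chi$ on all other edges of $G$. Because $G \to (K_{k+1})_r$ and every edge at $v$ now has a definite colour, we may — after the inevitable monochromatic $K_{k+1}$ appears — argue that one such copy must use $v$ (colourings that avoid $v$ were ruled out by $\chi$), say in colour $i$; its remaining $k$ vertices lie in $N(v)$, are joined to $v$ in colour $i$ (so $\psi \equiv i$ on them), and span a $K_k$ of colour $i$ among themselves, i.e.\ a strongly monochromatic $K_k$ in the pattern with vertex colours $\psi$. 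Hence $N(v)$ witnesses the defining property of $P_r(k)$, giving $P_r(k) \leq |N(v)| = \delta(G) = s_r(K_{k+1})$.

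The one subtlety to handle carefully is the bookkeeping around \emph{which} edges at $v$ get recoloured and making sure that every monochromatic $K_{k+1}$ really is forced through $v$. The clean way is to pick a specific edge $e = vw$ incident to $v$, use minimality of $G$ to get a colouring $\chi_e$ of $G - e$ with no monochromatic $K_{k+1}$, and observe that reinserting $e$ with \emph{any} colour creates monochromatic $K_{k+1}$'s only along $e$; then for a given $\psi$ one recolours all of $v$'s edges according to $\psi$ — this may destroy some of the structure of $\chi_e$ away from $v$, so instead it is cleaner to keep $\chi_e$ fixed off $v$ and note that any new monochromatic clique must use at least one recoloured edge, all of which are at $v$. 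This is the main obstacle, but it is purely a matter of setting up the colouring precisely; the rest is a direct translation between the Ramsey-minimality condition and the clique-packing definition. (This argument is essentially the $r$-colour generalisation of the lower-bound half of the Burr–Erd\H{o}s–Lov\'asz computation, and the lower bound in \eqref{eq:simplebounds} can be read off the same way.)
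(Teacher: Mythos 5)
Your proof is correct and follows essentially the same route as the paper: fix a minimum-degree vertex $v$ of an $r$-Ramsey-minimal graph, use minimality to colour the remaining edges without a monochromatic $K_{k+1}$, read off the colour pattern inside $N(v)$, and extend the colouring to the edges at $v$ according to any given vertex colouring of $N(v)$. The paper sidesteps your edge-deletion bookkeeping entirely by taking a good colouring of the proper subgraph $G-v$, so every monochromatic $K_{k+1}$ in the extended colouring automatically contains $v$.
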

\begin{proof}
Let $G$ be an $r$-Ramsey-minimal graph for $K_{k+1}$ with a vertex $v$ of degree $s_r(K_{k+1})$. Let $\chi:E(G-v) \rightarrow [r]$ be an $r$-colouring of $G-v$ 
without a monochromatic $K_{k+1}$; such a colouring exists by the minimality of $G$. 
Let $G_1, \ldots , G_r\subseteq G[N(v)]$ be the pairwise edge-disjoint subgraphs
of the $r$ colours within the neighbourhood $N(v)$ of $v$; they form a $K_{k+1}$-free
colour pattern on $N(v)$.  We show that any vertex-colouring of $G[N(v)]$ 
must contain a strongly monochromatic $k$-clique and hence, by the definition of 
$P_r(k)$, the number of vertices $|N(v)|=s_r(K_{k+1})$ must be at least $P_r(k)$.
Indeed, given any vertex-colouring of $N(v)$ we may define an extension of $\chi$
to the edges incident to $v$ by colouring an edge $vu$ with the colour of the vertex 
$u \in N(v)$. 
Since $G$ is $r$-Ramsey for $K_{k+1}$, this extension of $\chi$ contains
a monochromatic $(k+1)$-clique $H$. Moreover, $H$ must contain $v$
(as $\chi$ was free of monochromatic $K_{k+1}$). 
By the definition of the extension of $\chi$, 
the vertices of $H$ in $N(v)$ form a strongly monochromatic $K_k$ in $G[N(v)]$.
%Indeed, this is because $G$ is Ramsey for $K_{k+1}$, and this $K_{k+1}$ must contain $v$, since $\chi$ has no monochromatic $K_{k+1}$. 
%Then the statement that every edge-colouring extending $\chi$ to $G$ contains a monochromatic $K_{k+1}$ 
\end{proof}

In order to show $s_r(K_{k+1}) \leq P_r(k)$, we first prove a theorem 
that guarantees, for any integer $r \geq 2$ and graph $H$ which is
$3$-connected or a triangle, a fixed colour pattern on a given induced
subgraph of some graph $G$ which is not $r$-Ramsey for
$H$, in {\em any} monochromatic $H$-free $r$-colouring of $G$. 
A similar theorem was proved for $H=K_k$ and for $r=2$ in \cite{bel1976}, 
where they use it to show $s_2(K_{k+1}) \leq P_2(k)$. The tools used to prove this were generalised to any $3$-connected graph $H$ in \cite{bnr1984}, and, more recently, to any number of colours and any graph $H$ which is $3$-connected or a triangle \cite{rs2008}.

\begin{theorem}\label{bel}
Let $H$ be any $3$-connected graph or $H=K_3$ and let $G_1,\ldots,G_r$ be an 
$H$-free colour pattern. Then there is a graph $G$ with an induced copy of the 
edge-disjoint union $G_1 \cup \cdots \cup G_r$ so that $G\nrightarrow (H)_r$ 
and in any monochromatic $H$-free $r$-colouring of $E(G)$  
each $G_i$ is monochromatic and no two distinct $G_i$ and $G_j$ are 
monochromatic of the same colour.
\end{theorem}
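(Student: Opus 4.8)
The plan is to prove this by \emph{gluing signal senders}, the technique of Burr, Erd\H{o}s and Lov\'asz \cite{bel1976} as extended by Burr, Ne\v{s}et\v{r}il and R\"odl \cite{bnr1984} and by R\"odl and Siggers \cite{rs2008}. Recall the gadget involved: for $r\geq2$, a graph $H$ which is $3$-connected or a triangle, and a large integer $d$, a \emph{positive signal sender} (respectively a \emph{negative signal sender}) is a graph $S$ carrying two distinguished \emph{signal edges} $e_S,f_S$, at distance at least $d$ in $S$, such that $S\nrightarrow(H)_r$; in every $r$-colouring of $E(S)$ with no monochromatic copy of $H$ the edges $e_S$ and $f_S$ receive the same colour (respectively, distinct colours); and, conversely, every single colour (respectively, every ordered pair of distinct colours) is realised on the pair $(e_S,f_S)$ by some $r$-colouring of $S$ with no monochromatic $H$. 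The existence of such senders, for every $r\geq2$ and every admissible $H$, is exactly what \cite{rs2008} establishes, generalising the case $r=2$, $H=K_k$ of \cite{bel1976} and the case $r=2$, $H$ $3$-connected, of \cite{bnr1984}; I would invoke it as a black box and fix $d$ to be, say, larger than $|V(H)|$.

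Given these gadgets, I would construct $G$ from the edge-disjoint union $B:=G_1\cup\cdots\cup G_r$ on the common vertex set $V$ as follows. Fix in each (nonempty) $G_i$ a reference edge $g_i$. For every $i$ and every edge $g\in E(G_i)$ with $g\neq g_i$, attach a fresh positive signal sender by identifying its two signal edges with $g$ and with $g_i$; the induced equality constraints then force all of $E(G_i)$ to receive one and the same colour. For every pair $i\neq j$, attach a fresh negative signal sender by identifying its signal edges with $g_i$ and with $g_j$; these force the colours of the classes to be pairwise distinct. All senders are taken pairwise vertex-disjoint away from these identifications and vertex-disjoint from $V$ apart from the (at most four) vertices each of them is glued along; and whenever the two edges one wishes to link share a vertex, the link is first rerouted through an auxiliary edge on new vertices so that every sender is glued along a vertex-disjoint pair of edges. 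Because each sender is attached only along its signal edges, no edge is introduced among the vertices of $V$, so $B$ is an \emph{induced} copy of $G_1\cup\cdots\cup G_r$ in $G$; and because the signal edges of each sender lie far apart in it and $H$ is $3$-connected or a triangle, the only copies of $H$ in $G$ are those confined to a single sender. This last point is where the structural hypothesis on $H$ is essential --- a $3$-connected graph (on at least four vertices), or a triangle, cannot be cut off along a small vertex separation --- and it is carried out exactly as in \cite{bnr1984,rs2008}.

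It remains to verify the two Ramsey assertions. To see $G\nrightarrow(H)_r$, colour every edge of $G_i$ with colour $i$: since the colour pattern is $H$-free this colouring has no monochromatic $H$ inside $B$, and it colours the signal edges of every positive sender alike and those of every negative sender with two distinct colours, so by the converse property each sender extends it to a colouring of itself with no monochromatic $H$; by the confinement property from the previous paragraph, the resulting $r$-colouring of $G$ has no monochromatic $H$. For the forcing statement, let $\chi$ be any $r$-colouring of $E(G)$ with no monochromatic $H$. Restricted to any single sender, $\chi$ still has no monochromatic $H$, so the positive senders yield $\chi(g)=\chi(g_i)$ for every $g\in E(G_i)$ --- that is, $\chi$ is constant on each $G_i$ --- and the negative senders yield $\chi(g_i)\neq\chi(g_j)$ for $i\neq j$; hence in $\chi$ each $G_i$ is monochromatic and no two are monochromatic of the same colour, as claimed.

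The real obstacle lies entirely in the black box: constructing signal senders in the first place, and especially for $H=K_3$, which is only $2$-connected and is precisely the case where \cite{rs2008} needs an additional idea beyond \cite{bnr1984}. Once the senders are in hand, the only step demanding genuine care is the confinement claim --- that composing many senders creates no copy of $H$ other than those living inside one sender --- which is exactly the place $3$-connectivity (or the triangle exception) is used, and which therefore pins down the class of graphs $H$ to which the theorem applies.
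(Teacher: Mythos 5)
Your proposal is correct and takes essentially the same route as the paper: signal senders from R\"odl--Siggers as a black box, positive senders forcing each $G_i$ to be monochromatic, negative senders forcing distinct colours, the colouring of each $G_i$ with colour $i$ extended into the senders for $G\nrightarrow(H)_r$, and the distance/$3$-connectivity argument to confine copies of $H$. The only differences are cosmetic: the paper anchors each colour class at a new isolated edge $e_i$ disjoint from $V$ (which automatically avoids the shared-vertex issue you patch with auxiliary edges), and your confinement claim should be stated as ``every copy of $H$ lies inside a single sender \emph{or} inside $G_1\cup\cdots\cup G_r$'' --- which is what your colouring argument in fact already uses.
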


\begin{proof}
We use the idea of signal sender graphs which was first introduced 
by Burr, Erd\H{o}s and Lov\'{a}sz \cite{bel1976}. 
Let $r\geq 2$ and $d\geq 0$ be integers and $H$ be a graph. 
A {\em negative (positive) signal sender $S=S^-(r,H, d)$ ($S=S^+(r,H,d)$)}
is a graph $S$  with two distinguished %independent 
edges $e,f \in E(S)$ of distance at least $d$, such that 
\begin{itemize}
\item[$(a)$] $S \nrightarrow (H)_r$, and 
\item[$(b)$] in every $r$-colouring of $E(S)$ without a monochromatic copy of $H$, 
the edges $e$ and $f$ have different (the same) colours.
\end{itemize}
We call $e$ and $f$ the {\em signal edges} of $S$.

Burr, Erd\H{o}s and Lov\'{a}sz \cite{bel1976} 
showed that positive and negative signal senders exist for arbitrary $d$
in the special case when the number of colours is two, and $H$ is a clique on at least
three vertices.
Later, Burr, Ne\v{s}et\v{r}il and R\"{o}dl \cite{bnr1984} 
extended these results to arbitrary $3$-connected $H$.
Finally, R\"{o}dl and Siggers \cite{rs2008} 
constructed positive and negative signal senders 
$S^-(r,H,d)$ and $S^+(r,H,d)$ for any $r\geq 3$, $d\geq 0$ 
as long as $H$ is $3$-connected or $H=K_3$. 

%Once the existence of signal senders in which the 
%signal edges may be arbitrarily far apart was established, 
%we can join them together and create almost any colour pattern 
%which we desire. 

Let $H$ be a graph that is either $3$-connected or $H=K_3$ and let 
$G_1,\ldots,G_r$ be an $H$-free colour pattern on vertex set $V$. 
We construct our graph $G$ using the signal senders of R\"odl and Siggers.
We first take the graph on $V$ which is the edge-disjoint union of the edge sets of the 
graphs $G_i$ and add $r$ isolated edges $e_1,\ldots,e_r$ disjoint from $V$. 
Then for every $i$ and every edge $f\in E(G_i)$ we add a copy of $S^+(r,H,|V(H)|)$, such that
$f$ and $e_i$ are the two signal edges and the sender graph is otherwise 
disjoint from the rest of the construction. Finally,
for every pair of edges $e_i,e_j$, we add a copy of $S^-(r,H, |V(H)|)$, such that
$e_i$ and $e_j$ are the two signal edges and the sender graph is otherwise 
disjoint from the rest of the construction. 

By the properties of positive and negative signal senders, in any $r$-colouring of $G$ without a monochromatic 
 $H$, each $G_i$ must be monochromatic and no two $G_i,G_j$ may be monochromatic in the same colour.

Now we need only to show that there exists an $r$-colouring of $G$ with no 
monochromatic $H$. 
For this, we first colour each $G_i$ with colour $i$. Then, we extend this colouring 
to a colouring of each signal sender so that each signal sender contains no 
monochromatic copy of $H$. This is possible since each positive (negative) signal sender 
has a colouring without a monochromatic copy of $H$ in which the signal edges have 
the same (different) colours. 
%and since the colours are indistinguishable each positive (negative) signal sender has a colouring without a monochromatic copy of $H$ in which $e$ and $f$ have any colour, as long as they are both the same (different) colour.
 Let us consider a copy of $H$ in $G$. We will see that $H$ is 
 contained either within $G_1 \cup \cdots \cup G_r$ or within one of the signal senders
 and hence it is not monochromatic.  
If this was not the case, then there would be 
a vertex $v_1$ of $H$ that is not in any of the signal edges,
that is, $v_1\in V(S)$ for some signal sender $S$ but not contained in any of the two signal edges of $S$. Since $H$ is not entirely in $S$, there must be
a vertex $v_2\in V(H)\setminus V(S)$. This immediately implies that $H\neq K_3$, since 
$v_1$ and $v_2$ are not adjacent. Since $H$ is $3$-connected there are three internally
disjoint $v_1,v_2$-paths in $H$. These paths can leave $S$ only through one of 
its two signal edges. Hence there is a path of $H$ in $S$ between the two signal edges. This is a contradiction because the distance of the two signal edges in $S$ is at least $|V(H)|$.
%
%Therefore, we have that in our colouring of $G$ none of the signal senders contain a monochromatic copy of $H$, and by assumption on the $G_i$ the edge-union $G_1 \cup \cdots \cup G_r$ does not contain a monochromatic copy of $H$. However, there are no other copies of $H$, so $G$ is not Ramsey for $H$.
\end{proof}

%we first pin down a simple observation which is fundamental in discussions of upper bounds for $s_r(H)$.
%\begin{observation}\label{trivialUpperBdd}
%Let $r\geq 1$, let $H$ and $G$ be graphs, and let $v$ be a vertex in $G$. 
%Further, assume that $G \rightarrow (H)_r$ and $G-v \nrightarrow (H)_r$. 
%Then $s_r(H) \leq \deg (v)$.
%\end{observation}
%\begin{proof}
%Since $G-v \nrightarrow (H)_r$, any subgraph $G' \subseteq G$ that is 
%$r$-Ramsey-minimal for $H$ 
%(one exists) needs to contain $v$. Therefore, 
%$s_r(H) \leq \delta (G') \leq \deg_{G'}(v) \leq \deg_G(v)$. 
%\end{proof}

Theorem \ref{bel} allows us to finish the proof of Theorem~\ref{fundamental}.
\begin{theorem} \label{fundamentalUpper} 
$s_r(K_{k+1}) \leq P_r(k)$.
\end{theorem}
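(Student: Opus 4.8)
The plan is to show that $P_r(k)$ vertices suffice to construct an $r$-Ramsey-minimal graph for $K_{k+1}$ whose minimum degree is at most $P_r(k)$. Let $n = P_r(k)$ and let $G_1, \ldots, G_r$ be a $K_{k+1}$-free colour pattern on a vertex set $V$ of size $n$ with the property that every $[r]$-colouring of $V$ yields a strongly monochromatic $K_k$. Apply Theorem~\ref{bel} with $H = K_{k+1}$ (which is $3$-connected for $k \geq 2$) to obtain a graph $G_0$ containing an induced copy of $G_1 \cup \cdots \cup G_r$ on $V$, such that $G_0 \nrightarrow (K_{k+1})_r$, yet in every monochromatic-$K_{k+1}$-free $r$-colouring of $E(G_0)$ each $G_i$ is monochromatic and no two $G_i, G_j$ receive the same colour.

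The next step is to add one new vertex $v$ joined to every vertex of $V$, forming a graph $G := G_0 + v$, and to argue that $G \rightarrow (K_{k+1})_r$. Indeed, take any $r$-colouring $\chi$ of $E(G)$; if the restriction of $\chi$ to $E(G_0)$ already contains a monochromatic $K_{k+1}$ we are done, so assume not. Then by the conclusion of Theorem~\ref{bel} each $G_i$ is monochromatic in a distinct colour, say $G_i$ gets colour $\sigma(i)$ with $\sigma$ a permutation of $[r]$; relabelling, we may assume $G_i$ has colour $i$. Now the colouring $\chi$ assigns to each edge $vu$ (for $u \in V$) some colour, which we read as a colouring of the vertices of $V$. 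By the defining property of $P_r(k)$ there is a strongly monochromatic $K_k$ on a vertex set $S \subseteq V$, say in colour $c$: that is, the edges $\{vu : u \in S\}$ all have colour $c$ \emph{and} the edges of $G_c$ inside $S$ — wait, more precisely the clique $K_k$ on $S$ lies inside $G_c$ and every edge $vu$, $u \in S$, has colour $c$. Since $G_c$ has colour $c$ too, $S \cup \{v\}$ spans a monochromatic $K_{k+1}$ in colour $c$. Hence $G \rightarrow (K_{k+1})_r$.

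The final step is to pass to a minimal subgraph. Since $G \rightarrow (K_{k+1})_r$, there is an $r$-Ramsey-minimal subgraph $G' \subseteq G$ with $G' \rightarrow (K_{k+1})_r$, and $G' \in \cM_r(K_{k+1})$. It remains to bound $\delta(G')$. The key point is that $G_0 \nrightarrow (K_{k+1})_r$, so $G'$ cannot be a subgraph of $G_0$; therefore $v \in V(G')$. Thus $\delta(G') \leq \deg_{G'}(v) \leq \deg_G(v) = |V| = n = P_r(k)$, which gives $s_r(K_{k+1}) = \min_{G' \in \cM_r(K_{k+1})} \delta(G') \leq P_r(k)$, as required.

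\textbf{Main obstacle.} The conceptual content is entirely front-loaded into Theorem~\ref{bel} (equivalently, the existence of signal senders for $r$ colours due to R\"odl and Siggers), which is quoted; given that, the argument here is a clean adaptation of the Burr--Erd\H{o}s--Lov\'asz cone construction. The one place that needs care is verifying that in the Ramsey step the permutation $\sigma$ really lets us read off a vertex-colouring of $V$ to which the definition of $P_r(k)$ applies — i.e.\ that a strongly monochromatic $K_k$ in $G_c$ plus the cone edges of the matching colour genuinely closes up to a monochromatic $K_{k+1}$, using that the copy of $G_1 \cup \cdots \cup G_r$ sits \emph{induced} in $G_0$ so that the $K_k$ promised inside $G_c$ is present as edges of $G$. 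No real difficulty is expected beyond bookkeeping.
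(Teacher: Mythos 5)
Your proof is correct and follows essentially the same route as the paper's: apply Theorem~\ref{bel} to an extremal colour pattern, cone a new vertex $v$ over $V$, deduce that the resulting graph is $r$-Ramsey for $K_{k+1}$ by reading the colours of the cone edges as a vertex-colouring of $V$, and observe that any Ramsey-minimal subgraph must contain $v$, whose degree is $P_r(k)$. (The parenthetical ``$K_{k+1}$ is $3$-connected for $k\geq 2$'' is slightly off for $k=2$, but immaterial since Theorem~\ref{bel} covers $H=K_3$ explicitly.)
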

\begin{proof}
Let a $K_{k+1}$-free colour pattern $G_1,\ldots,G_r$ be given on vertex set 
$V$ with $\size{V} = P_r(k)$, so that any $[r]$-colouring 
of $V$ contains a strongly monochromatic $K_k$. 
Take $G$ as in Theorem \ref{bel} with $H=K_{k+1}$, 
and define $G'$ to be $G$ with a new vertex $v$ which is incident only to $V$. 
We claim that $G'\rightarrow K_{k+1}$, that is for any 
$r$-colouring $\chi$ of $G'$ we find a monochromatic $K_{k+1}$.
If already the restriction of $\chi$ to $V(G)$ contains a monochromatic $K_{k+1}$ 
then  we are done. Otherwise, by %property (b) of $G$ from 
Theorem~\ref{bel}, 
we have that, after potentially permuting the colours, each subgraph $G_i \subseteq G[V]$ 
is monochromatic in colour $i$. We define a colouring of $V$ by colouring 
$u \in V$ with $\chi(uv)$. 
Then, by the choice of $G_1,\ldots,G_r$, there is a strongly monochromatic clique in $V$.
This clique along with vertex $v$ forms a monochromatic $K_{k+1}$ in the colouring $\chi$. 

So $G' \rightarrow K_{k+1}$. Now observe that any $r$-Ramsey-minimal subgraph of  $G'$
must contain the vertex $v$, since  $G'-v=G$ is not $r$-Ramsey for $K_{k+1}$ by %property (a) of $G$ from 
Theorem~\ref{bel}.
Hence for the minimum degree of any $r$-Ramsey-minimal subgraph $G''\subseteq G'$ 
we have that 
$s_r(K_{k+1}) \leq \delta (G'') \leq \deg_{G''}(v) \leq \deg_{G'}(v) = P_r(k)$.
\end{proof}

\section{Lower bounds on $P_r(k)$}\label{sec:lowerbdds}

First, we %generalise an argument in \cite{fl2006} to 
prove a simple linear 
lower bound on $P_r(k)$. This simple estimate will later be used to obtain a super-quadratic lower bound.

\begin{lemma}\label{simpleBdd}
For all $r\geq 2$ and $k\geq 3$, we have $P_r(k) > (k-1)r$.
\end{lemma}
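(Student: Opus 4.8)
The plan is to prove the equivalent statement that no valid configuration for $P_r(k)$ can live on at most $(k-1)r$ vertices; since $P_r(k)$ is defined as a minimum, this gives $P_r(k)>(k-1)r$. Concretely, I will show: if $G_1,\ldots,G_r$ is \emph{any} colour pattern on a vertex set $V$ with $|V|\le (k-1)r$ (we do not even need $K_{k+1}$-freeness), then there is an $[r]$-colouring of $V$ containing no strongly monochromatic $K_k$, so $V$ fails the property in the definition of $P_r(k)$.

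The colouring is the obvious one. Since $|V|\le (k-1)r$, we may partition $V$ into $r$ blocks $V_1,\ldots,V_r$, each of size at most $k-1$. Colour every vertex of $V_i$ with colour $i$. Now fix a colour $i$: the set of vertices coloured $i$ is exactly $V_i$, which has at most $k-1<k$ vertices, so the graph $G_i$ restricted to $V_i$ cannot contain $K_k$ as a subgraph. Hence no $k$-clique has all its vertices and all its edges in colour $i$, for any $i$, i.e.\ there is no strongly monochromatic $K_k$.

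Consequently, no colour pattern on at most $(k-1)r$ vertices has the defining property of $P_r(k)$, and therefore $P_r(k)\ge (k-1)r+1>(k-1)r$, as claimed (the hypotheses $r\ge 2$, $k\ge 3$ are not essential to the argument). There is essentially no obstacle in this lemma; the only points requiring (minimal) care are the trivial observations that a vertex set of size less than $k$ is automatically free of $K_k$, and that the bound $|V|\le (k-1)r$ is exactly what is needed to split $V$ into $r$ blocks of size $\le k-1$, after which one simply invokes the minimality in the definition of $P_r(k)$.
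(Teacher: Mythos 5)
Your proof is correct, but it is a genuinely different argument from the one in the paper. You ignore the graphs entirely: you split $V$ into $r$ blocks of size at most $k-1$, colour each block with its own colour, and observe that no colour class even has $k$ vertices, so no strongly monochromatic $K_k$ can exist. The paper instead uses a local degree argument: since the $G_i$ are pairwise edge-disjoint, each vertex $v$ has total degree at most $|V|-1 \leq (k-1)r-1$ summed over all colour classes, so by pigeonhole there is some colour $i(v)$ in which $\deg_{G_{i(v)}}(v) \leq k-2$; colouring $v$ with $i(v)$ then prevents $v$ from lying in any strongly monochromatic $K_k$, since its degree in that colour is too small. Both arguments yield exactly the bound $P_r(k) > (k-1)r$; yours is more elementary in that it needs neither the edge-disjointness of the pattern nor any graph structure, while the paper's colouring is adapted to the pattern vertex by vertex (the kind of argument that also underlies the simple lower bound $s_r(H) > r(\delta(H)-1)$ mentioned in the introduction). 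Neither approach gives anything stronger here, so your route is a perfectly valid, if anything slightly cleaner, substitute.
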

\begin{proof}
We will show that for any given colour pattern $G_1,\ldots,G_r$ on vertex set $V$, 
$|V|\leq (k-1)r$,
there is a vertex-colouring of $V$ without a strongly monochromatic $K_k$ and hence,
$P_r(k) > (k-1)r$.
Observe that every vertex $v\in V$ has degree at most $k-2$ in at least one of 
the colour classes, say $G_{i(v)}$. 
Colouring vertex $v$  with colour $i(v)$ ensures that $v$ is not contained 
in any strongly monochromatic $K_k$, as its degree in $G_{i(v)}$ is too low. 
Hence, as promised, 
this vertex-colouring of $V$ produces no strongly monochromatic $K_k$.
\end{proof}

For a graph $F$,  the {\em $k$-independence number} $\alpha_k(F)$ is 
the largest cardinality of a subset $I \subseteq V(F)$ without a $K_{k}$. 
For $k=2$, this is the usual independence number $\alpha(F)$.
Recall that the Erd\H{o}s-Rogers function $f_{k,k+1}(n)$ is defined to be 
the minimum value of $\alpha_k(F)$ over all $K_{k+1}$-free graphs $F$ on $n$ vertices. 

The following proposition provides the recursion for our lower bound.

\begin{proposition}\label{cor:LowerBddWithErdoesRogers}
For all $r,k\geq 2$ we have that $P_r(k)$ satisfies the following inequality: 
$$P_r(k)\geq P_{r-1}(k) + f_{k,k+1}(P_r(k))$$
\end{proposition}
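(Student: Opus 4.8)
The plan is to take an optimal colour pattern on $P_r(k)$ vertices, find a large "low-degree" set using the Erd\H{o}s--Rogers property, and restrict to it to obtain a colour pattern witnessing $P_{r-1}(k)$. Concretely, let $n = P_r(k)$ and let $G_1,\ldots,G_r$ be a $K_{k+1}$-free colour pattern on a vertex set $V$ with $|V| = n$ such that every $[r]$-colouring of $V$ contains a strongly monochromatic $K_k$. Consider the $r$-th colour class $G_r$. Since $G_r$ is $K_{k+1}$-free and has $n$ vertices, by the definition of $f_{k,k+1}$ there is a subset $W \subseteq V$ with $|W| \geq f_{k,k+1}(n) = f_{k,k+1}(P_r(k))$ that contains no $K_k$ in $G_r$; that is, $\alpha_k(G_r[W]) \geq f_{k,k+1}(n)$, so we may take such a $W$ with $|W| = f_{k,k+1}(P_r(k))$ (or more).

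Now the key step is an averaging/extremal argument over the complement $V \setminus W$. The idea is that the vertices outside $W$ can "absorb" the remaining $r-1$ colours in the following sense. Suppose toward a contradiction that $P_{r-1}(k) > |V \setminus W| = n - |W|$. Then the colour pattern $G_1[V\setminus W], \ldots, G_{r-1}[V \setminus W]$ induced on $V \setminus W$ by the first $r-1$ colours is a $K_{k+1}$-free colour pattern on fewer than $P_{r-1}(k)$ vertices, so by definition of $P_{r-1}(k)$ there is an $[r-1]$-colouring $c$ of $V \setminus W$ with no strongly monochromatic $K_k$. Extend $c$ to all of $V$ by assigning colour $r$ to every vertex of $W$. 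I claim this $[r]$-colouring of $V$ has no strongly monochromatic $K_k$: a strongly monochromatic $K_k$ in colour $i < r$ would lie entirely in $V \setminus W$ (no vertex of $W$ has colour $i$) and be monochromatic in $G_i$, contradicting the choice of $c$; a strongly monochromatic $K_k$ in colour $r$ would lie entirely in $W$ and be a $K_k$ in $G_r[W]$, contradicting $\alpha_k(G_r[W]) \geq f_{k,k+1}(n)$ — wait, more precisely contradicting that $W$ was chosen to contain no $K_k$ of $G_r$. Either way we contradict the defining property of the pattern $G_1,\ldots,G_r$. Hence $P_{r-1}(k) \leq n - |W| = P_r(k) - f_{k,k+1}(P_r(k))$, which rearranges to the claimed inequality.

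The one subtlety to handle carefully is the monotonicity of $f_{k,k+1}$ and the choice of $|W|$: we want $|W|$ exactly equal to $f_{k,k+1}(P_r(k))$, which is fine since any $K_k$-free subset in $G_r$ of size at least $f_{k,k+1}(n)$ contains one of size exactly $f_{k,k+1}(n)$, and a subset of a $K_k$-free set is $K_k$-free. I should also note that we may assume $f_{k,k+1}(P_r(k)) \leq P_r(k)$ so that $V \setminus W$ is genuinely a (possibly empty) vertex set; if $|W| = |V|$ then $P_{r-1}(k) \leq 0$ is vacuous, but in fact $f_{k,k+1}(n) \le n$ always, with equality only in degenerate small cases, so there is nothing to worry about. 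I do not expect a real obstacle here — the argument is a clean combination of the definition of $P_r(k)$ with that of the Erd\H{o}s--Rogers function, and the only thing that needs care is making sure the extension colouring creates no monochromatic $K_k$ in either the "old" colours or the new colour $r$, which is exactly what the two defining properties give us.
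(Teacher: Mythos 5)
Your proof is correct and follows essentially the same route as the paper: remove a $K_k$-free set of $G_r$ of size $f_{k,k+1}(P_r(k))$, observe that any $[r-1]$-colouring of the remaining vertices extends (by giving the removed set colour $r$) to an $[r]$-colouring of $V$, and conclude that the restricted pattern on $V\setminus W$ witnesses $|V\setminus W|\geq P_{r-1}(k)$. The only cosmetic difference is that you phrase this as a proof by contradiction while the paper argues directly, which changes nothing of substance.
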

\begin{proof}
Take $G_1,\ldots,G_r$ to be a $K_{k+1}$-free colour pattern  on vertex set 
$V$, $|V|=P_r(k)$, so that any $r$-colouring of the vertices
contains a strongly monochromatic $K_k$.
Let $I\subseteq V$ be a $k$-independent set 
of size $\alpha_k(G_r)$ in the graph $G_r$. 
We claim that the $K_{k+1}$-free colour pattern $G_1, \ldots , G_{r-1}$ restricted
to the vertex set $V\setminus I$ has the property that any 
$[r-1]$-colouring $c: V\setminus I \rightarrow [r-1]$ 
contains a strongly monochromatic $K_k$.
Indeed, the extension of $c$ to $V$ which colours the vertices in $I$ with colour $r$
must contain a strongly monochromatic $K_k$ and this must be inside $V\setminus I$,
since $I$ does not contain $K_k$ at all. 
Hence $\size{V\setminus I} \geq P_{r-1}(k)$ and then, since $G_r$ is a 
$K_{k+1}$-free graph on $P_r(k)$ vertices, we have that
%$$P_r(k) - f_{k,k+1}(P_r(k))
%\geq P_r(k) - \alpha_k(G_r) = \size{V\setminus I} \geq P_{r-1}(k).$$
\begin{equation*}
P_r(k) = \size{V\setminus I} + \size{I} 
	\geq P_{r-1}(k) + \alpha_k(G_r)
	\geq P_{r-1}(k) + f_{k,k+1}(P_r(k)). \qedhere
\end{equation*}
%
%\begin{lemma}\label{lem:lowerBoundGeneral}
%Let $k,r \geq 2$. There exists a $K_{k+1}$-free 
%colour pattern $G_1,\ldots,G_r$ on $[P_r(k)]$ so that
%$$\max_i\alpha_k(G_i) \leq P_r(k) - P_{r-1}(k).$$
%\end{lemma}
%%An illustration of the  proof idea can be found in Figure \ref{fig:lowerbound2}. 
%%\begin{figure} [tbp]
%% \centering
%%\phantom{asdfasdfasdfaasdfasfasdfad}
%% \includegraphics[width=0.5\textwidth]{figures/LowerBdd2}
%% \caption{An illustration of Lemma \ref{lem:lowerBoundGeneral} for $r= 4$.}
%% \label{fig:lowerbound2}
%% \end{figure}
%%\begin{proof}
%Since $i$ was arbitrary, this is true for all $i$ and we have the desired result.
\end{proof}

%Lemma \ref{lem:lowerBoundGeneral} reveals the main idea of our lower bounds: 
%To prove our lower bounds via Lemma \ref{lem:lowerBoundGeneral}, 
%we will use induction on $r$ and 
%find a large $k$-independent set in at least one of the graphs $G_i$. 
%Since all the graphs $G_1, \ldots, G_r$ in the previous lemma 
%are $K_{k+1}$-free, we have $\alpha_k(G_i)\geq f_{k,k+1}(P_r(k))$ for every colour $i$.

Therefore, we are interested in good lower bounds on the Erd\H{o}s-Rogers function 
$f_{k,k+1}(n)$.
It is easy to see that every $K_{k+1}$-free graph $F$ on $n$ vertices contains a $K_k$-free set
of size at least $\left\lfloor\sqrt{n}\right\rfloor$. If  there exists a vertex $v$ of degree 
at least $\left\lfloor\sqrt{n}\right\rfloor$, then $N(v)$ is a $K_k$-free set of
size at least $\left\lfloor\sqrt{n}\right\rfloor$. 
Otherwise, $\Delta(F) \leq \left\lfloor\sqrt{n}\right\rfloor -1 $ and we can use the well-known fact that $\alpha (F) \geq n/(\Delta(F) +1)$ 
(cf. \cite{as2008}) to deduce that $\alpha_k(F)\geq 
\alpha(F)\geq \left\lfloor\sqrt{n}\right\rfloor$. 
Therefore, $f_{k,k+1}(n) \geq \left\lfloor\sqrt{n}\right\rfloor$. 

A result of Shearer \cite{s1983} implies that 
$f_{2,3}(n)\geq \left(1-o(1)\right) \sqrt{(n \ln n)/2}$, which is the best known lower bound 
on $f_{2,3}(n)$. Bollob\'{a}s and Hind \cite{bh1991} proved that  
$f_{3,4}(n)\geq \sqrt{2n}$. This lower bound
was subsequently 
improved by Krivelevich \cite{k1994}. 
Recently, Dudek and Mubayi \cite{dm2012} showed that this result can be strengthened to 
$$ f_{k,k+1}(n)=\Omega \left(\sqrt{\frac{n \log n}{\log \log n}}\right)$$
by using a result of Shearer \cite{s1995}.

{\em Proof of the lower bounds in Theorem~\ref{ManyColoursTriangle}  and \ref{ManyColoursGeneral}.}
Let $k$ be fixed and and for brevity let us write $P_r:=P_r(k)$. 
Let $f_{k,k+1}(n) \geq g(n)\sqrt{n}$ for $n\geq n_0$, where 
$g(n)=g_k(n)$ is a non-decreasing function such that 
$\frac{Cg^2(n-1)}{n} > g^2(n)-g^2(n-1)>0$ for $n\geq n_0$ with some constant $C=C(k)$. 
Note that one can take $g_2(n) = \frac{1}{2}\sqrt{\ln n}$ by \cite{s1983} and for $k\geq 3$
one can take $g_k(n) = c\sqrt{\frac{\ln n}{\ln \ln n}}$ with some constant $c=c(k)$ by \cite{dm2012}.\\
We show that there exists a constant $c'=c'(k)$ such that for $r\geq n_0+1$, 
$$P_r \geq  c'(rg(r))^2,$$
which then implies the lower bounds in Theorems \ref{ManyColoursTriangle} and \ref{ManyColoursGeneral}. 
 
We prove this statement by induction on $r$. For $r=n_0+1$ this is true provided 
$c'$ is chosen small enough. 
For $r> n_0+1$, by Proposition~\ref{cor:LowerBddWithErdoesRogers} and since 
$f_{k,k+1}$ is  non-decreasing, we have that
\begin{align*}
P_r & \geq P_{r-1} + f_{k,k+1}(P_{r-1})\geq P_{r-1} + \sqrt{P_{r-1}}g(P_{r-1}). 
\end{align*}
Using the induction hypothesis, Lemma~\ref{simpleBdd} and that $g$ is non-decreasing for $r-1\geq n_0$, we obtain
\begin{align*}
P_r & \geq c'((r-1)g(r-1))^2 + \sqrt{c'} (r-1) g(r-1) g(r-1) \\
 & \geq c' (rg(r))^2 + r(g(r-1))^2 \left( \sqrt{c'} -2c' - c'r
 \left( \frac{(g(r))^2}{(g(r-1))^2} -1\right) - \frac{\sqrt{c'}}{r} \right).  
\end{align*}
By our assumption on $g$ the last term is positive, provided $c'$ is small enough.
%
%For $k\geq 3$, we have by Proposition \ref{cor:LowerBddWithErdoesRogers} that, for $r \geq e^e$, taking $P_r:=P_r(k)$ there is some constant $c=c(k)$ so that
%$$P_r \geq P_{r-1} + f_{k,k+1}(P_r) \geq P_{r-1} + c\sqrt{\frac{P_r \ln P_r}{\ln \ln P_r}}.$$
%Now, we may use that $P_r \geq P_{r-1}$ and that $P_r \geq r$ by Lemma \ref{simpleBdd} to give the recurrence
%$$P_r \geq P_{r-1}+ c\sqrt{\frac{P_{r-1} \ln r}{\ln \ln r}}.$$
%This recurrence resolves to $P_r \geq \Omega\left(r^2\frac{\ln r}{\ln \ln r}\right)$, giving the lower bound in Theorem \ref{ManyColoursGeneral}. When $k=2$, we instead get the recurrence $$P_r \geq P_{r-1} + \sqrt{P_{r-1}\ln r}.$$ This resolves to $P_r \geq \Omega\left(r^2 \ln r \right)$, giving the lower bound in Theorem \ref{ManyColoursTriangle}.
\qed

\section{Packing $(n,r,k)$-critical graphs}\label{sec:packings}

In this section we prove the upper bounds in Theorems~\ref{ManyColoursTriangle}, \ref{ManyColoursGeneral} and \ref{ManyColoursGeneralWeak}. 
Our task is to derive upper bounds for $P_r(k)$, that is we want to find $K_{k+1}$-free
colour patterns such that every $r$-colouring of the vertices produces a strongly monochromatic $K_k$. 
Let us first motivate the idea behind our proofs. Given a colour pattern $G_1,\ldots,G_r$ on an $n$-element vertex set $V$ and any $[r]$-colouring of $V$, at least one of the colours, say $i$, occurs $n/r$ times. If every set of at least $n/r$ vertices in $G_i$ contains a $K_k$, then we must have a strongly monochromatic clique in colour $i$. This motivates the following 
definition: we  call a graph $F$ on $n$ vertices {\em $(n,r,k)$-critical} if 
$K_{k+1}\not\subseteq F$ and $\alpha_{k}(F)<n/r$. We have thus obtained the following lemma.
\begin{lemma} \label{lemma:critical}
If there exists a colour pattern $G_1,\ldots,G_r$ where each $G_i$ is $(n,r,k)$-critical, then $P_r(k) \leq n$.
\end{lemma}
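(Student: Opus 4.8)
The plan is to show that the given colour pattern $G_1,\ldots,G_r$ is itself a witness for the inequality $P_r(k)\le n$: I will check that it is a $K_{k+1}$-free colour pattern on an $n$-element vertex set in which every $[r]$-colouring of the vertices produces a strongly monochromatic $K_k$. The bound $P_r(k)\le n$ then follows immediately from the definition of $P_r(k)$ as the \emph{smallest} such $n$.

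First I would observe that the pattern is $K_{k+1}$-free by hypothesis, since each $G_i$ is $(n,r,k)$-critical and in particular $K_{k+1}\not\subseteq G_i$. Next, fix an arbitrary vertex-colouring $c\colon V\to[r]$ and let $V_i=c^{-1}(i)$ denote the colour classes. By the pigeonhole principle there is an index $i$ with $|V_i|\ge n/r$. Since $G_i$ is $(n,r,k)$-critical we have $\alpha_k(G_i)<n/r\le|V_i|$, so $V_i$ is too large to be a $K_k$-free set in $G_i$; hence $G_i[V_i]$ contains a copy of $K_k$. All vertices of this $K_k$ lie in $V_i$ and therefore receive colour $i$ under $c$, and all of its edges lie in $G_i$, so the copy is strongly monochromatic. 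As $c$ was arbitrary, every $[r]$-colouring of $V$ yields a strongly monochromatic $K_k$, which is exactly what is required.

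I do not expect a genuine obstacle here: the statement is a direct averaging argument over the $r$ colour classes. The only subtlety worth flagging is the strict inequality in the definition of $(n,r,k)$-critical — it is precisely the condition $\alpha_k(G_i)<n/r$ (rather than $\le$) that forces a colour class of size at least $n/r$ to contain a $K_k$, and this is why the definition is phrased that way (were $r\mid n$ and every class of size exactly $n/r$, a non-strict bound would not suffice).
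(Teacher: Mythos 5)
Your proof is correct and follows exactly the paper's argument, which appears as the motivating pigeonhole discussion immediately preceding the lemma: some colour class has at least $n/r$ vertices, and since $\alpha_k(G_i)<n/r$ that class contains a $K_k$ in $G_i$, giving a strongly monochromatic clique. Your remark about the strict inequality in the definition of $(n,r,k)$-critical is a fair observation but introduces no new content.
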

For the rest of this section, we will focus on packing $r$ edge-disjoint 
$(n,r,k)$-critical  graphs into the same $n$-element vertex set, 
such that $n$ is as small as possible. 

In order to produce at least one $(n,r,k)$-critical graph, 
let us recall the Erd\H{o}s-Rogers function,
defined as $f_{k,k+1}(n)=\min\{\alpha_{k}(F)\}$, where the 
minimum is taken over all $K_{k+1}$-free graphs $F$ on $n$ vertices. 
By definition, we have for all $u\in \R$ that 
\begin{align}\label{eq:equivalenceER-NRKcritical}
f_{k,k+1}(n)<u 
&\quad \Longleftrightarrow \quad 
\text{there exists an $(n,n/u,k)$-critical graph. }
\end{align}
So the question whether at least one $(n,r,k)$-critical graph exists on $n$ vertices 
is equivalent to the question whether $f_{k,k+1}(n)<n/r$.

%We want to construct a colour pattern $G_1,\ldots,G_r$ 
%on vertex set $[n]$ such that each $G_i$ is $(n,r,k)$-critical. 
%Since we will get the bound $P_r(k) \leq n$, 
%one would like to have $(n,r,k)$-critical graphs 
%where $n=n(r,k)$ is as small as possible. 

When $k=2$, an $(n,r,2)$-critical graph 
is precisely an $n$-vertex triangle-free graph with 
%small independence number.
independence number less than
$n/r$. Hence an $(n,r,2)$-critical graph exists if and only if 
$n < R\left(3,\lceil n/r\rceil\right)$. 
It is known that $R(3,k)=\Theta \left(k^2/\ln k\right)$ where the upper 
bound was first shown by Ajtai, Koml\'os and Szemer\'edi~\cite{aks1980} and the matching 
lower bound was first established by Kim~\cite{k1995}.
%The upper bound of order $O \left(k^2/\ln k\right)$ was proved 
%in and \cite{s1983}; 
%the lower bound of order $\Omega(k^2/\ln k)$ was first proved in  and the 
%constant factor was improved in \cite{bk2013} and \cite{fgm2013}).
Therefore, if $G$ is an $(n,r,2)$-critical graph, then 
$n\geq c\cdot r^2 \ln r$ for some constant $c>0$, 
and $(n,r,2)$-critical graphs do exist for  
$n = C\cdot r^2 \ln r$ for some constant $C>0$.
For our purpose, however, we need to pack $r$ many
$(n,r,2)$-critical graphs in an edge-disjoint fashion into $n$ vertices. 
The next lemma states that we can do so at the 
expense of a factor of $\ln r$. 
\begin{lemma} \label{ManyColoursGadget3}
Let $r$ be an integer. 
Then there exists %$r$ edge disjoint graphs 
a colour pattern 
$G_1,\ldots,G_r$ on vertex set $[n]$, where $n= O(r^2 \ln^2 r)$, 
such that each $G_i$ is %triangle-free and $\alpha(G_i)<\nicefrac{n}{r}$.
$(n,r,2)$-critical.
\end{lemma}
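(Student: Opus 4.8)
The plan is to use Lemma~\ref{lemma:critical}: it suffices to produce $r$ pairwise edge-disjoint $(n,r,2)$-critical graphs on a common vertex set $[n]$ with $n=O(r^2\ln^2 r)$, and I will take all $r$ of them to be isometric copies of one well-chosen triangle-free graph $H$. The point of using copies of a single fixed $H$ is that every copy has exactly the same independence number $\alpha(H)$, so the requirement $\alpha_2(G_i)=\alpha(G_i)<n/r$ is discharged once and for all by the choice of $H$, and the whole remaining task becomes a pure \emph{packing} problem: find $r$ edge-disjoint copies of $H$ in $K_n$. (This is exactly why one should not instead take random subgraphs of something or clean up overlapping copies by deleting edges: removing $t$ edges from a triangle-free graph can raise its independence number by $t$, and in our regime $t$ would dwarf $n/r$, so a single deletion could ruin criticality.)

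For the choice of $H$, set $n:=\lceil C r^2\ln^2 r\rceil$. Since $R(3,k)=\Theta(k^2/\ln k)$ (and, more precisely, by Kim's roughly $\Theta(k)$-regular graphs realising the lower bound), the smallest independence number of an $n$-vertex triangle-free graph is $\Theta(\sqrt{n\ln n})=\Theta(r\ln^{3/2}r)=o(n/r)$; hence we may pick a triangle-free $H$ on $[n]$ with $\alpha(H)<n/r$ — in fact with a $\sqrt{\ln r}$ factor of room — and, choosing $H$ sparse (e.g. a suitable induced subgraph of Kim's near-extremal graph for $k=\lceil n/r\rceil$), with $e(H)=O(r^3\ln^3 r)$. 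The crucial consequence is $r\cdot e(H)=O(r^4\ln^3 r)=o(n^2)=o\!\left(\binom{n}{2}\right)$: a family of $r$ edge-disjoint copies of $H$ would occupy only a vanishing fraction of the edges of $K_n$. This is precisely where the extra $\ln r$ is spent: moving from the single-graph threshold $m=\Theta(r^2\ln r)$ up to $n=\Theta(r^2\ln^2 r)$ is what creates enough slack for the packing to be possible.

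It remains to carry out the packing, and this is the main obstacle. A naive placement fails: two uniformly random copies of $H$ in $K_n$ share $\approx e(H)^2/\binom n2\to\infty$ edges in expectation, so neither random rotations nor a crude greedy ``place the next copy avoiding all previously used edges'' will work (the latter fails even a union bound, because copies of $H$ pile up on vertices, so the already-used graph, though globally sparse, has maximum degree $\Theta(r^2\ln r)$, too large for Sauer--Spencer-type embedding). Instead one places the copies by a Rödl-nibble / semi-random argument: the copies of $H$ in $K_n$ are the edges of a hypergraph on the vertex set $E(K_n)$ that is exactly $\bigl(2e(H)(n-2)!/|\mathrm{Aut}(H)|\bigr)$-regular, and whose codegrees are a $\Theta(e(H)/n^2)=o(1)$ fraction of the degree (by a routine count of copies of $H$ through one or two prescribed edges of $K_n$); a nibble then yields a matching of size $(1-o(1))\binom n2/e(H)=\omega(r)$, i.e.\ at least $r$ pairwise edge-disjoint copies of $H$, which we take to be $G_1,\dots,G_r$. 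The only delicate point is that the hyperedge size $e(H)$ grows with $n$, so one must invoke a version of the nibble valid in that range (or run a more careful sequential embedding while keeping the maximum degree of the used graph under control); I expect this bookkeeping, rather than any conceptual difficulty, to be where the real work lies.
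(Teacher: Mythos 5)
Your reduction to a packing problem is reasonable in spirit, but the central step — packing $r$ edge-disjoint copies of a fixed near-extremal triangle-free graph $H$ into $K_n$ — is a genuine gap, not bookkeeping. The hypergraph you propose to apply the nibble to (vertices the edges of $K_n$, hyperedges the copies of $H$) has uniformity $e(H)=n^{3/2+o(1)}$; Pippenger--Spencer/R\"odl-nibble theorems and their known extensions require the edge size to be constant or very slowly growing, and a small codegree-to-degree ratio alone is nowhere near sufficient in that regime, so there is no off-the-shelf matching theorem to invoke. Your fallback (sequential embedding while controlling the degree of the used graph) is exactly where the argument breaks: after $i\leq r$ rounds the used graph has maximum degree up to $r\Delta(H)$, which is of order $r^2$ times polylog, i.e.\ comparable to $n$ up to a small polylog factor, and embedding a spanning graph $H$ of degree about $\sqrt{n}$ into the complement of such a graph is precisely the situation where Sauer--Spencer (as you note, $\Delta(F)\Delta(H)\gg n$) and all elementary embedding tools fail; rescuing it would need heavy modern approximate-decomposition machinery, far beyond what the lemma requires. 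There is also a secondary numerical slip: an induced subgraph of Kim's graph for $k=\lceil n/r\rceil$ only guarantees maximum degree $\Theta(r\ln^2 r)$, hence $e(H)=O(r^3\ln^4 r)$ and $r\,e(H)=\Theta(n^2)$ — no slack at all; you would instead want Kim's graph on $n$ vertices itself (degree $\Theta(r\ln^{3/2}r)$) to recover a $\sqrt{\ln r}$ margin.

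The paper avoids this entire difficulty by \emph{not} insisting that the $r$ pieces be copies of one fixed graph. It works iteratively: starting from $H_1=K_n$, it applies the Lov\'asz Local Lemma to the current host $H_i$ to extract a triangle-free subgraph $G_i$ with no independent set of size $m=n/r$ \emph{and}, crucially, with at most $q=\binom{m}{2}/(2r)$ edges inside every $m$-set. The only property of the host the LLL step needs is that every $m$-set spans at least $\binom{m}{2}/2$ edges, and the extra sparsity condition on $G_i$ guarantees this invariant survives all $r$ rounds after deleting $G_1,\dots,G_i$. Thus edge-disjointness comes for free, each $G_i$ is $(n,r,2)$-critical by construction (its independence number is controlled directly, not inherited from a fixed template), and the whole proof stays at the level of a single Local Lemma computation. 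That relaxation — extract \emph{some} critical graph from whatever dense-on-$m$-sets host remains, rather than pack copies of a rigid $H$ — is the idea your proposal is missing.
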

Lemma~\ref{ManyColoursGadget3} together with Lemma~\ref{lemma:critical} and 
Theorem~\ref{fundamental} complete
the proof of Theorem~\ref{ManyColoursTriangle}.

\noindent
For fixed $k\geq 3$, Dudek, Retter, and R\"odl \cite{drr2013}
recently showed that $f_{k,k+1}(n)= O \left((\ln n)^{4k^2} \sqrt{n}\right)$.  
That is, they constructed a
$K_{k+1}$-free graph $F$ on $n$ vertices (where $n$ is large enough) 
such that every subset of $c(\ln n)^{4k^2} \sqrt{n}$ 
vertices contains a $K_k$. 
This is an $(n,r,k)$-critical graph $F$ with $n= c^2\big((2+o(1))\ln r \big)^{8k^2} r^2$. 
Again, we would like to pack $r$ of those graphs into $K_n$. 
But rather than taking a fixed  $(n,r,k)$-critical graph $F$ and pack it into $K_n$, 
we construct $r$ (edge-disjoint) $(n,r,k)$-critical graphs $G_1,\ldots,G_r$ 
simultaneously as subgraphs of $K_n$. 
As it turns out, this simultaneous construction is only little harder than 
the construction itself in \cite{drr2013}; we prove it by black-boxing theorems from \cite{drr2013}.
\begin{lemma} \label{ManyColoursGadget2}
For all integers $k \geq 3$ there exist a constant $C=C(k)>0$ 
and $r_0\in N$ such that, for all $r\geq r_0$, the following holds. 
There exists % $r$ edge disjoint $(n,r,k)$-critical graphs 
a colour pattern 
$G_1,\ldots,G_r$ on vertex set $[n]$, where $n\leq C \left(\ln r\right)^{8k^2} r^2$, 
such that each $G_i$ is $(n,r,k)$-critical.
\end{lemma}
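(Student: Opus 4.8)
The plan is to mimic the construction of Dudek, Retter, and R\"odl \cite{drr2013} that yields a single $(n,r,k)$-critical graph, but to run it so that $r$ edge-disjoint copies are produced simultaneously on the same vertex set. First I would recall what \cite{drr2013} actually proves: there is a $K_{k+1}$-free graph $F$ on $n$ vertices in which every vertex-subset of size $c(\ln n)^{4k^2}\sqrt{n}$ contains a $K_k$. The known proof is probabilistic — one takes a suitable random graph (a binomial random graph with carefully chosen edge probability, or a random subgraph of an algebraically defined host), and shows via a union bound over all large vertex subsets, together with a deletion/alteration step removing the (few) copies of $K_{k+1}$, that the desired property holds with positive probability. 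The key observation is that the edge probability $p$ used there is of the form $n^{-1/2+o(1)}$ (so that the graph is sparse), which is exactly what makes room for many colours.

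The construction then goes as follows. Set $n=C(\ln r)^{8k^2}r^2$ as in the statement, so that the target independence parameter $\alpha_k$ we need, namely $n/r = C(\ln r)^{8k^2}r$, is of the same order of magnitude $(\ln n)^{4k^2}\sqrt{n}$ (up to adjusting $C$) as the bound guaranteed by \cite{drr2013}. Now, instead of one random graph, I would generate a single random \emph{edge-colouring} of $K_n$: colour each edge of $K_n$ independently, assigning colour $i\in[r]$ with probability $p=\Theta(1/\sqrt{n})$ each and "no colour" with the remaining probability. Let $G_i$ be the graph of colour-$i$ edges; the $G_i$ are automatically pairwise edge-disjoint, and each individual $G_i$ has the same distribution as the binomial random graph $G(n,p)$ used in \cite{drr2013}. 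Consequently each of the two bad events for a fixed $i$ — "$G_i$ contains many copies of $K_{k+1}$" and "$G_i$ has a $K_k$-free set of size $\geq n/r$" — is controlled exactly as in \cite{drr2013}: the expected number of $K_{k+1}$'s in $G_i$ is $o(n)$, and the probability that some set of size $n/r$ is $K_k$-free is $o(1/r)$ (this is where the $(\ln n)$-factor in $n$ is spent). Taking a union bound over all $r$ colours, with positive probability \emph{every} $G_i$ simultaneously has at most, say, $n/2r$ copies of $K_{k+1}$ and no $K_k$-free set of size $\geq n/r$. Finally, for each such colouring delete one edge from every copy of $K_{k+1}$ in every $G_i$: this removes at most $n/2$ edges in total, hence deletes at most $n/2$ vertices worth of... — more carefully, it only decreases degrees, so the resulting graphs $G_i'$ are $K_{k+1}$-free, and since we have deleted only few edges we can guarantee (by choosing $C$ generously, or by a standard argument deleting a bounded number of vertices incident to deleted edges) that each $G_i'$ still has $\alpha_k(G_i') < n/r$, i.e. each $G_i'$ is $(n,r,k)$-critical, as desired.

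The main obstacle is making the alteration step respect \emph{all $r$ colour classes at once}. In the single-graph argument one deletes edges from $K_{k+1}$'s in $F$ and argues that the independence-type property is robust to deleting $o(n)$ edges; here we must delete from all $G_i$ simultaneously while keeping each of them both $K_{k+1}$-free and still having small $\alpha_k$. The cleanest route is probably to prove the stronger "robust" statement that with positive probability every $G_i$ has \emph{no} $K_k$-free set of size $\geq n/(2r)$ (a factor $2$ of slack), then delete one edge per $K_{k+1}$ in each $G_i$ — a total of at most $\sum_i (\text{\# }K_{k+1}\text{'s in }G_i) = o(n)$ edges — and observe that each deletion can increase $\alpha_k(G_i)$ by at most a constant depending on $k$ (adding/removing one edge changes $\alpha_k$ by at most $1$, in fact), so after $o(n)$ deletions $\alpha_k(G_i)$ is still below $n/r$. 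Since essentially every quantitative estimate needed is already isolated as a theorem in \cite{drr2013}, the work is to phrase their lemmas in a colour-agnostic way (they are statements about $G(n,p)$, which is exactly the marginal distribution of each $G_i$) and then combine them by a single union bound over $i\in[r]$; this is why, as the excerpt says, "this simultaneous construction is only little harder than the construction itself in \cite{drr2013}." I would write the proof by explicitly black-boxing the relevant probabilistic estimates from \cite{drr2013} as stated bounds on $\mathbb{P}[\alpha_k(G(n,p))\geq n/r]$ and on the expected count of $K_{k+1}$'s, then assembling them as above.
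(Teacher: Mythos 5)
Your proposal rests on the premise that the Dudek--Retter--R\"odl graph is (or can be replaced by) a binomial random graph, so that colouring the edges of $K_n$ independently with $r$ colours of probability $p=\Theta(n^{-1/2})$ each gives colour classes with ``the same distribution as in \cite{drr2013}''. That premise is false, and the step it is meant to support fails. The construction in \cite{drr2013} is not $G(n,p)$: it places carefully structured graphs along the lines of a (partial) affine plane of order $q$, with a random set of lines, and it is this geometric structure that reconciles $K_{k+1}$-freeness with the property that every set of size $\sqrt{n}\,\mathrm{polylog}(n)$ contains a $K_k$. A plain $G(n,p)$ cannot have both properties for $k\geq 3$. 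Concretely, with $m=n/r=\Theta\left((\ln r)^{8k^2}r\right)$, which is $\sqrt{n}$ times a polylogarithmic factor, and $p=\Theta(n^{-1/2})$, the expected number of copies of $K_k$ inside a fixed $m$-set is of order $m^kp^{\binom{k}{2}}=n^{k(3-k)/4}\,\mathrm{polylog}(n)$: for $k\geq 4$ this tends to $0$, so a typical set of size $n/r$ contains no $K_k$ at all, and for $k=3$ the probability that a fixed $m$-set is triangle-free is at least $e^{-\mathrm{polylog}(n)}$ (by Harris's inequality), which is hopeless against the $\binom{n}{m}=e^{\widetilde{\Omega}(\sqrt{n})}$ sets in the union bound. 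So the claimed bound ``$\Pr[\text{some set of size } n/r \text{ is } K_k\text{-free}]=o(1/r)$'' is not available for $G(n,p)$; raising $p$ to force $K_k$'s into every $m$-set makes the number of $K_{k+1}$'s explode far beyond what any alteration can absorb --- this tension is exactly why $f_{k,k+1}(n)=O(\sqrt{n}\,\mathrm{polylog}\,n)$ required the structured constructions of \cite{drr2013}, while random-graph arguments only reach bounds of order $n^{2/3}$ (cf.\ \cite{dr2011}). The alteration bookkeeping is also off: for $k=3$ at $p=\Theta(n^{-1/2})$ the expected number of $K_4$'s is $\Theta(n)$, not $O(n/r)$, and deleting $o(n)$ edges may increase $\alpha_k$ by $o(n)\gg n/(2r)$, so the ``factor $2$ of slack'' does not cover it.

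The paper's proof black-boxes \cite{drr2013} at the level of their random incidence structure rather than at the level of $G(n,p)$: it calls a sub-hypergraph of the affine plane of order $q$ \emph{good} if a $K_{k+1}$-free graph with the required $K_k$-density can be embedded along its lines, quotes their Lemma~2.2 saying that keeping each line independently with probability $\log^2 q/q$ yields a good hypergraph with probability at least $1/2-o(1)$, and then randomly partitions the lines into $c=q/\log^2 q>4r$ classes, each having exactly this marginal distribution. Note that only a constant success probability per class is available, so even in the correct setting your union bound over $r$ colours would not close; the paper instead uses linearity of expectation to find a partition with at least $c/4>r$ good classes, and edge-disjointness is automatic because all graph edges lie inside lines and two lines meet in at most one point. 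Your idea of generating all colour classes from a single random experiment so that each class has the right marginal is in the same spirit, but it must be applied to the lines of the incidence structure, not to the edges of $K_n$, and the final step needs the expectation argument rather than a union bound.
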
 
Lemma~\ref{ManyColoursGadget2} together with Lemma~\ref{lemma:critical} and 
Theorem~\ref{fundamental} complete
the proof of Theorem~\ref{ManyColoursGeneral}.

For the upper bound in Theorem \ref{ManyColoursGeneralWeak}, 
we are motivated by graphs constructed by Dudek and R\"odl in \cite{dr2011}. 
The graph $F$ on $n$ vertices constructed in \cite{dr2011} is $(n,r,k)$-critical 
with $n=O (k^6r^3)$.
Here it is not as clear to just refer to lemmas from \cite{dr2011} 
in order to do a ``simultaneous'' construction. 
So we will start the construction from scratch and provide all the details needed.  

\begin{lemma} \label{ManyColoursGadget1}
Let $k, r \geq 3$. 
Then there exists % $r$ edge disjoint $(n,r,k)$-critical graphs 
a colour pattern 
$G_1,\ldots,G_r$ on vertex set $[n]$, where $n\leq 8k^6 r^3$, 
such that each $G_i$ is $(n,r,k)$-critical.
\end{lemma}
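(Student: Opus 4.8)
By Lemma~\ref{lemma:critical} it suffices to produce, on a common vertex set $[n]$ with $n\le 8k^6r^3$, a colour pattern $G_1,\dots,G_r$ (i.e. $r$ pairwise edge-disjoint graphs) each of which is $(n,r,k)$-critical, that is $K_{k+1}$-free with $\alpha_k(G_i)<n/r$. The strategy is to take the single $(n,r,k)$-critical graph $F$ of Dudek and R\"odl~\cite{dr2011} on $n=O(k^6r^3)$ vertices and carry out its construction ``simultaneously'', obtaining $r$ edge-disjoint copies rather than one. First I would reconstruct $F$ from scratch (as announced in the text): it is assembled from a triangle-free ``base'' graph of small independence number together with an iterated product/amalgamation step that raises the clique number up to $k$ while keeping $\alpha_k(F)<n/r$; all the independence-number estimates needed come from \cite{dr2011} and the $R(3,\cdot)$-type bounds already cited in the paper, and I would reprove the pieces actually used. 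A basic structural feature to extract along the way is that $F$ is sparse: since $F$ is $K_{k+1}$-free, the neighbourhood of any vertex is $K_k$-free, so $\Delta(F)\le\alpha_k(F)<n/r$, and with a little care the construction can be arranged to be (nearly) regular of degree only $\mathrm{poly}(k)\cdot r$.

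For the simultaneous version I see two routes, and I would pursue whichever the reconstructed construction supports most cleanly. Route (a): perform every step — base graph, products, amalgamations — equivariantly for a transitive abelian group $\Gamma$ of order $n$, so that $F=\mathrm{Cay}(\Gamma,S)$; triangle-freeness, $K_{k+1}$-freeness and the bounds on $\alpha_k$ are all translation-invariant, so only the symmetric connection set $S$ must be controlled. Since $|S|=\Delta(F)$ is small, one can choose automorphisms $\phi_1,\dots,\phi_r$ of $\Gamma$ (for $\Gamma=\mathbbm{Z}_n$, multiplications by suitable units) so that the sets $S_i:=\phi_i(S)$ are pairwise disjoint — a greedy choice works once $\phi(n)>r|S|^2$, which holds for $n=\Theta(k^6r^3)$ provided $|S|=O(\mathrm{poly}(k)\,r)$ — and set $G_i:=\mathrm{Cay}(\Gamma,S_i)$; then the $G_i$ are pairwise edge-disjoint and each is isomorphic to $F$, hence $(n,r,k)$-critical, which is exactly what Lemma~\ref{lemma:critical} requires. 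Route (b): if the construction is probabilistic, rerun it with an extra random $[r]$-colouring of its edges built in, and take a union bound over the $r$ colours: for each colour $i$ one checks that $G_i$ is $K_{k+1}$-free (inherited) and that with probability $>1-1/r$ every set of $n/r$ vertices spans a $K_k$ in colour $i$; this costs only a constant blow-up of the parameters, still within $8k^6r^3$.

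The main obstacle is keeping the $r$ copies edge-disjoint \emph{and} each one $(n,r,k)$-critical at the same time. In Route (a) this means verifying that the entire Dudek--R\"odl construction can be realised as a Cayley graph whose connection set is small enough to admit $r$ disjoint automorphic translates; this may force a modification of their construction (e.g.\ choosing the base triangle-free graph as a circulant) and one must track the degree bound $|S|=O(\mathrm{poly}(k)\,r)$ through every step. In Route (b) the obstacle is driving the failure probability below $1/r$ simultaneously over all $r$ colours and all $\binom{n}{n/r}$ candidate $K_k$-free sets, which is precisely where the constant $8$ and the exponents $6$ and $3$ in $8k^6r^3$ get pinned down, and this is why the construction must be redone from scratch rather than black-boxed. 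One point that is \emph{not} an obstacle: merging colours may create a $K_{k+1}$ in the union $G_1\cup\dots\cup G_r$, but this is irrelevant, since Lemma~\ref{lemma:critical} only asks that each $G_i$ individually be $K_{k+1}$-free.
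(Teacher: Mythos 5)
There is a genuine gap: your write-up is a plan conditioned on properties of the Dudek--R\"odl construction that you neither establish nor correctly describe, and both of your proposed routes to simultaneity fail quantitatively. The actual construction (which the paper redoes from scratch) is not an ``iterated product/amalgamation'': one takes $q$ the smallest prime power with $q\ge k^2r$, vertex set $\F_q^3$ (so $n=q^3\le 8k^6r^3$), and for each of $r$ values $\lambda$ a family $\cL_\lambda$ of lines whose slopes lie on the $\lambda$-moment curve $\{(1,\lambda\alpha,\lambda\alpha^2)\}$; along each line one places a random balanced complete $k$-partite graph. Edge-disjointness of the $r$ graphs is purely structural (distinct moment curves are disjoint, and two points lie on at most one line), and a union bound over the $\binom{n}{\lfloor n/r\rfloor}$ candidate sets, done independently for each $\lambda$, gives criticality. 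In such a graph every vertex lies on $q-1$ lines and has degree $\Theta(q^2)=\Theta(k^4r^2)$, which breaks Route (a): your greedy disjointness condition $\phi(n)>r|S|^2$ would require $n\gg r\cdot k^8r^4=k^8r^5$, while $n\le 8k^6r^3$; the hoped-for degree $|S|=O(\mathrm{poly}(k)\,r)$ is unsubstantiated (and false for this construction), as is the assumption that the construction can be realised as a Cayley graph at all.

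Route (b) has a concrete quantitative flaw: if you randomly $[r]$-colour the edges of a single $(n,r,k)$-critical graph $F$, a fixed copy of $K_k$ is monochromatic in a prescribed colour $i$ with probability $r^{-\binom{k}{2}}$, so to make every $\lfloor n/r\rfloor$-set contain a $K_k$ in every colour you would need each such set to carry on the order of $r^{\binom{k}{2}}\ln\binom{n}{\lfloor n/r\rfloor}$ edge-disjoint copies of $K_k$; this is far beyond a ``constant blow-up'' and is incompatible with the bound $8k^6r^3$ (already for $k=3$ the exponent in $r$ would have to grow). The paper avoids exactly this trap by never colouring graph edges: the $r$ constructions share only the vertex set, their randomness (the partitions of each line into $k$ parts) is independent across $\lambda$, and within one $\lambda$ a clique $K_k$ lives entirely inside one line, so the first-moment computation $\PP(\cA(U))\le k^{|\cL_\lambda|}\exp\bigl(-\tfrac{q-1}{q}\lfloor q/k\rfloor|U|\bigr)$ closes the argument. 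To repair your proposal you would essentially have to reproduce this incidence-geometric construction and its union bound, i.e.\ the paper's proof, rather than either of your two routes.
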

Lemma~\ref{ManyColoursGadget1} together with Lemma~\ref{lemma:critical} and 
Theorem~\ref{fundamental} imply Theorem~\ref{ManyColoursGeneral}.

\subsection{Proofs of the Lemmas}
In the rest of this section we prove Lemmas~\ref{ManyColoursGadget3},
~\ref{ManyColoursGadget2}, and~\ref{ManyColoursGadget1}, 
each concerned with packing (edge-disjointly) $r$ graphs $G_1,\ldots,G_r$ 
which are all $(n,r,k)$-critical.  
%Recall that a graph $F$ is $(n,r,k)$-critical if it 
%is $K_{k+1}$-free and $\alpha_k(F)<n/r$, that is, every subset 
%of size $n/r$ contains a copy of $K_k$. 

\medskip
\noindent
{\bf {Packing many $K_3$-free graphs with small independence number.\\}}
%\noindent 
Here, we prove Lemma \ref{ManyColoursGadget3}.
To that end, we will show the existence of a graph $F$ on $n:=Cr^2\ln^2r$ vertices, where $C=1000$, 
which can be written as a union of edge-disjoint graphs $G_1$, \ldots, $G_r$ 
which are all $K_3$-free and without independent sets of size $n/r$. 
We will find the graphs $G_i$ successively as subgraphs of $K_n$ 
using the following.
\begin{lemma}[{Lov\'asz Local Lemma, see, e.g., \cite[Lemma 5.1.1]{as2008}}] 
\label{l:lovasz}
Let $A_1,A_2,\ldots,A_n$ be events in an arbitrary probability space. 
A directed graph $D=(V,E)$ on the set of vertices $V=\{1,\ldots,n\}$ is called a 
dependency digraph for the events $A_1,A_2,\ldots,A_n$ if for each $i$, $1\leq i \leq n$, 
the event $A_i$ is mutually independent of all the events $\{A_j : (i,j)\not\in E\}$. 
Suppose that $D=(V,E)$ is a dependency digraph for the above events and 
suppose there are real numbers $x_1,\ldots,x_n$ such that $0\leq x_i< 1$ and 
$\Pr(A_i)\leq x_i \prod_{(i,j)\in E} (1-x_j)$ for all $1\leq i \leq n$. 
Then 
$$\Pr\left( \bigwedge_{i=1}^n \widebar{A_i}\right) \geq \prod_{i=1}^n (1-x_i).$$ 
In particular, with positive probability no event $A_i$ holds. 
\end{lemma}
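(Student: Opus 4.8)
The plan is to deduce the stated bound from a stronger auxiliary claim that is well suited to induction: for every index $i$ and every subset $S \subseteq \{1,\ldots,n\} \setminus \{i\}$ such that the conditioning event has positive probability,
$$\Pr\left(A_i \,\Big|\, \bigwedge_{j \in S} \widebar{A_j}\right) \leq x_i.$$
Granting this claim, the lemma follows by a telescoping argument. Writing the intersection as a product of conditional probabilities via the chain rule, $\Pr(\bigwedge_{i=1}^n \widebar{A_i}) = \prod_{i=1}^n \Pr(\widebar{A_i} \mid \widebar{A_1} \wedge \cdots \wedge \widebar{A_{i-1}})$, and bounding each factor below by $1 - x_i$ using the claim, I obtain $\Pr(\bigwedge_{i=1}^n \widebar{A_i}) \geq \prod_{i=1}^n (1 - x_i)$, which is strictly positive since every $x_i < 1$. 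This yields both the quantitative bound and the final ``in particular'' assertion.

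I would prove the auxiliary claim by induction on $|S|$. The base case $S = \emptyset$ is immediate from the hypothesis, since $\Pr(A_i) \leq x_i \prod_{(i,j) \in E}(1 - x_j) \leq x_i$, each factor lying in $[0,1]$. For the inductive step the key idea is to split $S$ along the dependency digraph: set $S_1 = \{j \in S : (i,j) \in E\}$, the out-neighbours of $i$ inside $S$, and $S_2 = S \setminus S_1$, the non-neighbours. I then expand the conditional probability as the ratio
$$\Pr\left(A_i \,\Big|\, \bigwedge_{j \in S}\widebar{A_j}\right) = \frac{\Pr\left(A_i \wedge \bigwedge_{j \in S_1}\widebar{A_j} \,\Big|\, \bigwedge_{l \in S_2}\widebar{A_l}\right)}{\Pr\left(\bigwedge_{j \in S_1}\widebar{A_j} \,\Big|\, \bigwedge_{l \in S_2}\widebar{A_l}\right)}$$
and bound numerator and denominator separately.

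For the numerator, dropping the conjunction over $S_1$ only increases the probability, and since $A_i$ is mutually independent of $\{A_l : l \in S_2\}$ (as $S_2$ contains no out-neighbour of $i$), the numerator is at most $\Pr(A_i \mid \bigwedge_{l \in S_2}\widebar{A_l}) = \Pr(A_i) \leq x_i \prod_{(i,j) \in E}(1 - x_j)$. For the denominator I would apply the chain rule to $S_1 = \{j_1,\ldots,j_m\}$ and bound each factor $\Pr(\widebar{A_{j_t}} \mid \widebar{A_{j_1}} \wedge \cdots \wedge \widebar{A_{j_{t-1}}} \wedge \bigwedge_{l \in S_2}\widebar{A_l})$ from below by $1 - x_{j_t}$, invoking the induction hypothesis; this is legitimate because each conditioning set here has size at most $|S| - 1$. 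Hence the denominator is at least $\prod_{j \in S_1}(1 - x_j) \geq \prod_{(i,j) \in E}(1 - x_j)$, the final inequality holding because $S_1$ is contained in the out-neighbourhood of $i$ and every omitted factor lies in $[0,1]$. Dividing, the two copies of $\prod_{(i,j)\in E}(1-x_j)$ cancel and leave exactly $x_i$, completing the induction.

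The main obstacle, and the point demanding the most care, is the bookkeeping of the inductive step: one must check that the mutual-independence hypothesis is invoked correctly in the numerator — which is precisely why $S$ is partitioned along the digraph, so that the non-neighbours $S_2$ (on which $A_i$ does not depend) are isolated — and that every conditioning set arising from the chain rule in the denominator is strictly smaller than $S$, so that the induction hypothesis genuinely applies. The two elementary monotonicity facts, that conditioning on extra events $\widebar{A_j}$ can only delete factors from a product of $(1 - x_j)$ terms, and that each such factor lies in $[0,1]$, must be applied in the correct direction at each stage. A minor technical point is that all the conditioning events must have positive probability for the conditional probabilities to be defined; this follows inductively from the positivity of the product $\prod(1-x_j)$ established along the way.
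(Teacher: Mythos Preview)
Your proof is the standard textbook argument and is correct. Note, however, that the paper does not prove this lemma at all: it is merely quoted, with a citation to \cite[Lemma~5.1.1]{as2008}, and used as a black box in the proof of Lemma~\ref{lem:LLL}. Your write-up reproduces essentially the proof one finds in that reference, so there is nothing to compare against in the paper itself.
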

Given $r$, set $m:= n/r = C r \ln^2r$ and $q := {m \choose 2}/(2r)$. 
For a graph $H$ on $n$ vertices, 
we define $e_{\min}(m,H)$ ($e_{\max}(m,H)$) to be the smallest (largest) 
number of edges that appear in any subset $S \subseteq V(H)$
of size $\size{S} = m$. 
The following lemma is the crucial step to find the graphs $G_i$.

\begin{lemma}\label{lem:LLL}
Let $H = (V,E)$ be a graph on $n$ vertices, where $n\geq n_0$ is large enough, 
and assume $e_{\min}(m,H) \geq {m \choose 2}/2$. 
Then there is a subgraph $H' \subseteq H$ on the same vertex set such that $H'=(V,E')$ is triangle-free, has no independent set on $m$ vertices, and $e_{\max}(m,H') \leq q$.
\end{lemma}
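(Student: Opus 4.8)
The plan is to build $H'$ by random sparsification of $H$ followed by destroying all triangles. Fix a small constant $K$ and set $p = K/(r\ln r)$; form $H_p\subseteq H$ by retaining each edge of $H$ independently with probability $p$, and let $H'$ be $H_p$ with every edge lying in some triangle of $H_p$ deleted. Then $H'$ is triangle-free: a triangle of $H'$ would be a triangle of $H_p$ all three of whose edges are triangle-edges of $H_p$, hence deleted. Also $H'\subseteq H_p\subseteq H$. The point of this choice of $p$ is that it is small enough that $\binom m2 p\le q/2$ (so the expected number of $H_p$-edges in an $m$-set is at most $q/2$), yet large enough that an $m$-set typically contains $\Theta(pm^2)=\Theta(r\ln^3 r)$ edges of $H_p$; the constant $C=1000$ is generous enough to make the comparisons below go through.

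The straightforward property is $e_{\max}(m,H')\le q$. Since $e_{H'}(S)\le e_{H_p}(S)$ and $e_{H_p}(S)\sim\mathrm{Bin}(e_H(S),p)$ has mean at most $\binom m2 p\le q/2$, a Chernoff bound gives $\Pr[e_{H_p}(S)>q]\le e^{-\Omega(q)}$; because $q=\Theta(m^2/r)$ is much larger than $\ln\binom nm=\Theta(m\ln r)$, a union bound over all $m$-sets $S$ costs essentially nothing.

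The crux is the independence property: every $m$-set $S$ must keep an edge of $H'$, i.e.\ some $H_p$-edge inside $S$ lying in no triangle of $H_p$. Fix $S$ and reveal the randomness in two stages. First reveal $F:=H_p[S]$, the subgraph of $H[S]$ on the retained edges inside $S$; since $e_H(S)\ge\binom m2/2$, with the complementary event controlled by the second stage a ``typical'' $F$ has $\Theta(pm^2)$ edges, maximum degree $O(pm)=O(\ln r)$, and only $o(|F|)$ edges lying in a triangle entirely inside $S$. Each of the remaining $\Theta(|F|)$ edges of $F$ survives in $H'$ unless it is ``blocked'' by an apex $w\notin S$ with both its endpoints in $N_{H_p}(w)$. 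Now reveal the edges from $S$ to $V\setminus S$: a fixed edge is blocked with probability at most $np^2=CK^2<1$, and, viewed as events depending only on these external edges, the blocking events have dependency graph the line graph of $F$, of maximum degree $O(\Delta(F))=O(\ln r)$. Exploiting this sparse dependency, one shows that, conditioned on a typical $F$, the probability that every surviving-candidate edge of $F$ is blocked is at most $(CK^2)^{\Theta(|F|)}=e^{-\Omega(r\ln^3 r)}$, which beats $\binom nm=e^{O(r\ln^3 r)}$ precisely because $p=\Theta(1/(r\ln r))$ makes $pm^2$ of the same order as $m\ln r$ — with $C=1000$ large enough to win the constant comparison. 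A union bound over $S$, together with the second-stage bounds that $F$ is atypical, then completes the proof.

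I expect that last step to be the main obstacle, for two reasons. First, the ``atypical $F$'' events — too few internal edges, too many internal triangles, a too-large internal degree — are governed by counts of triangles and cherries, whose upper tails have only polynomial-concentration quality and are far weaker than Chernoff; this is what forces the two-stage revelation, so that the relevant quantities become sums of \emph{independent} contributions over distinct apex vertices rather than genuine subgraph counts. Second, one really needs the bound $(\mathrm{const})^{\Theta(|F|)}$ and not merely $(\mathrm{const})^{\Theta(\nu(F))}$, where $\nu(F)=\Theta(m)$ is the matching number: the naive argument that allots one independent blocking event per matching edge of $F$ is off by a factor $\ln r$ in the exponent and does not defeat the union bound. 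Extracting the extra $\ln r$ requires genuinely using the low maximum degree of the dependency (line) graph of the blocking events, which is exactly the setting for the Lovász Local Lemma, and the unavoidable slack in this argument is what leaves $n$ at $O(r^2\ln^2 r)$ rather than the optimal $O(r^2\ln r)$.
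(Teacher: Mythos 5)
Your architecture differs from the paper's: the paper keeps the $p$-random subgraph $H'$ (with $p=c_1n^{-1/2}=\Theta(1/(r\ln r))$, the same density you choose) and applies the Lov\'asz Local Lemma \emph{once}, with one bad event $T_S$ for each triangle of $H$ and one bad event $I_S$ for each $m$-set (``$S$ is independent or has more than $q$ edges''), so triangle-freeness is obtained directly and there is no deletion step and no blocking analysis at all. Your route (sparsify, then delete all triangle edges, then union bound over $m$-sets) could in principle work, but its crux step is not established. You need, for each $S$, that the probability that \emph{every} candidate edge of $F=H_p[S]$ is blocked by an external apex is at most $e^{-\alpha|F|}$ with $|F|=\Theta(r\ln^3 r)$; the justification you offer --- the low maximum degree of the dependency (line) graph, ``exactly the setting for the Lov\'asz Local Lemma'' --- cannot deliver this. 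The LLL gives \emph{lower} bounds on the probability that all bad events are avoided; it says nothing about upper bounds on $\Pr\bigl[\bigcap_e B_e\bigr]$. Worse, no argument using only ``each $B_e$ has probability $\le\mu$ and the dependency graph has maximum degree $d$'' can beat $\mu^{|F|/(d+1)}$: take $|F|/(d+1)$ independent groups of $d+1$ identical events and the intersection probability is exactly $\mu^{|F|/(d+1)}$. That is precisely the matching bound you yourself reject as losing a factor $\ln r$ in the exponent. To get exponent $\Theta(|F|)$ you must use the specific witness structure of blocking (each blocked edge selects an apex $w\notin S$ adjacent in $H_p$ to both endpoints, and an apex covering many $F$-edges must be joined to a vertex set of $F$ spanning many edges, costing $p^{|V|}$), combined with local sparsity of $F$ at all scales, via a first-moment count over apex assignments. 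None of this is in your sketch, and it is the heart of the matter.

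A second gap is the conditioning on a ``typical'' $F$. Your typicality includes $\Delta(F)=O(\ln r)$ and ``only $o(|F|)$ edges in internal triangles''. These are first-stage events (determined by $H_p[S]$ alone), so the two-stage revelation does not turn them into sums of independent apex contributions, and their failure probabilities are only $e^{-\Theta(\mathrm{polylog}\, r)}$ (maximum degree) and polynomially-weak upper-tail bounds (triangle counts). Since you must union bound over $\binom{n}{m}=e^{\Theta(r\ln^3 r)}$ sets $S$, writing $\Pr[S\ \text{independent in}\ H']\le\Pr[F\ \text{atypical}]+\Pr[\,\cdot\mid F\ \text{typical}\,]$ blows up on the first term; only the lower tail of $e(F)$ is cheap enough to condition on. So as written the proof does not close, and fixing it would require reworking both the blocking estimate and the treatment of atypical $F$ --- at which point the paper's single LLL application, which sidesteps both issues, is considerably simpler.
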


\begin{proof}
Let $c_1=1/4$ and $c_2=1/20$.
Choose $H'$ by including each edge of $H$ independently with probability 
$p:=c_1n^{-1/2}$.
For a subset $S \subseteq V$, let $e(S)$ and $e'(S)$ denote the number 
of edges in $H[S]$ and $H'[S]$, respectively.  
It suffices to show that $H'$ is triangle-free, $e_{\min}(m,H') \geq 1$, and 
$e_{\max}(m,H')\leq q$ with positive probability. 
To that end, we want to apply the 
Lov\'asz Local Lemma, and, therefore, 
we define the set of bad events in the natural way. 
Namely, for every $S \in \binom{V}{3}$ that forms a triangle in $H$, 
we set $T_S$ to be the event that $H'[S]$ is a triangle as well. 
Clearly, the probability of such an event is $p_T := p^3$. 
Further, for every $S \in \binom{V}{m}$, we set $I_S$ to be the event 
that either $S$ is an independent set in $H'$ or satisfies $e'(S) > q$. 
Then,
\begin{align*}
\PP (I_S) &\leq \PP(e'(S) = 0) + \PP (e'(S) \geq q)\\
	&\leq (1-p)^{e(S)} + \binom{e(S)}{q} p^q \\
	&\leq  (1-p)^{\binom{m}{2}/2} + \left(\frac{\binom{m}{2}ep}{q}\right)^q \\
	&= (1-p)^{\binom{m}{2}/2} +  (2epr)^q.
\end{align*}
Note that $(1-p)^{\binom{m}{2}/2} = \exp \left[- p \binom{m}{2}/2\,(1+o(1)) \right]= e^{-pqr(1+o(1))}$ and 
$ (2epr)^q = o(e^{-pqr(1+o(1))})$, 
since $pr \rightarrow 0$, so that for $n$ large enough 
$$ \PP (I_S) \leq 2 (1-p)^{\binom{m}{2}/2} =: p_I.$$
Let $\cE$ be the collection of bad events. That is, 
$\cE = \{T_S : H[S] \cong K_3\} \cup \{I_S : S\in \binom{V}{m} \}$. 
In the auxiliary dependency graph $D$, we connect two of the events 
$A_S,A_{S'} \in \cE$ if $\size{S \cap S'} \geq 2$. 
Then $A_S \in \cE$ is mutually independent from the family of all $A_{S'}$ 
for which $\{A_S,A_{S'}\}$ is not an edge in this dependency graph. 
To apply the Lov\'asz Local Lemma, we now bound the degrees in $D$. We denote by 
$N(E)$ the neighbours in the dependency graph $D$ of the event $E$. 
If $\size{S} = 3$ we have 
\begin{align*}
\size{N(T_S) \cap \{T_{S'} : \size{S'} = 3\}} &\leq 3n, \qquad \text{and}\\
\size{N(T_S) \cap \{I_{S'} : \size{S'} = m\}} &\leq {n \choose m}.\\
\shortintertext{If $\size{S} = m$ we have}
\size{N(I_S) \cap \{T_{S'} : \size{S'} = 3\}} &\leq {m \choose 2}(n-2) < {m \choose 2}n, \qquad \text{and}\\
\size{N(I_S) \cap \{I_{S'} : \size{S'} = m\}} &\leq {n \choose m}.
\end{align*}
Therefore, by Lemma \ref{l:lovasz}, 
if there exist real numbers $x,y \in [0,1)$ such that
\begin{align}
p_T &\leq x(1-x)^{3n}(1-y)^{{n \choose m}}\label{inequ1}\\
p_I &\leq y(1-x)^{{m \choose 2} n}(1-y)^{{n \choose m}}\label{inequ2},
\end{align}
then there exists a graph $H'$ such that none of the events in \cE\ occurs.
We show that these two conditions are fulfilled for 
$x = c_2n^{-3/2}$ and $y={n \choose m}^{-1}$. 
First note that, for $n$ large enough,
$$x(1-x)^{3n}(1-y)^{\binom{n}{m}} = c_2n^{-3/2}e^{-1}(1+o(1)) > p^3,$$ 
so Inequality \eqref{inequ1} holds. 
Now, \eqref{inequ2} is equivalent to 
\begin{align*}
2^{2/\binom{m}{2}}(1-p) 
	&\leq y^{2/\binom{m}{2}} (1-x)^{2n}(1-y)^{2\binom{n}{m}/\binom{m}{2}}.
\end{align*}
We use $1-p\leq e^{-p}$ and $1-z \geq e^{-z-z^2}$ for $z \leq 0.6$ to claim 
\eqref{inequ2} holds if 
\begin{align*}
\exp \left[ \frac{2 \ln 2}{\binom{m}{2}} -p\right] 
	&\leq \exp \left[ \frac{2 \ln y}{\binom{m}{2}} - 2n(x+x^2) - \frac{2 \binom{n}{m}}{\binom{m}{2}} (y+y^2)\right].
\end{align*}
Now, 
$ \frac{2 \ln y}{\binom{m}{2}} \geq - \frac{4}{\sqrt{C}} n^{-1/2} (1+o(1))$
and $1/m^2 = o\left(n^{-1/2}\right)$. 
So \eqref{inequ2} holds if 
\[ \exp \left[ -c_1n^{-1/2} (1+o(1))\right] 
	\leq \exp \left[ - (4/\sqrt{C} +2c_2) n^{-1/2} (1+o(1))\right],\]
which is satisfied by choice of $C,c_1,c_2$.
Applying Lemma \ref{l:lovasz} yields the existence of a subgraph $H'$ such that 
none of the events in \cE\ hold, i.e.\ $H'$ has the desired properties. 
\end{proof}

\begin{proof}[Proof of Lemma \ref{ManyColoursGadget3}]
Let $r$ large enough be given, and set $m:=n/r = C r \ln^2r$ and $q := {m \choose 2}/(2r)$ as before. 
Define $H_1 := K_n$. 
We choose our graphs inductively as subgraphs of $H_1$; 
given $H_i$ for $i \leq r$ such that $e_{\min}(m,H_i) \geq {m \choose 2} - (i-1)q$, we have since $i \leq r$ that 
$$e_{\min}(m,H_i) > {m \choose 2} - rq = \frac{1}{2}{m \choose 2},$$ 
so, by Lemma \ref{lem:LLL}, we may find $G_{i}$ a subgraph of $H_i$ 
with $e_{\max}(m,G_{i}) \leq q$ such that $G_{i}$ is triangle-free 
and has no independent set on $n/r$ vertices. 
Then take $H_{i+1} = H_i - G_{i}$. The graph $H_{i+1}$ will be edge-disjoint 
from $G_{i}$ (and, inductively, from $G_1,\ldots,G_{i-1}$), and 
$$e_{\min}(m,H_{i+1}) \geq e_{\min}(m,H_i) - e_{\max}(m,G_{i}) \geq {m \choose 2} - (i-1)q - q = {m \choose 2} - iq,$$
as desired.
\end{proof}

\medskip
\noindent
{\bf An upper bound tight up to a polylogarithmic factor in $r$\\}
Here, we prove Lemma \ref{ManyColoursGadget2}.
We will rely heavily on the graphs constructed in \cite{drr2013} and use its 
construction as a black box. 

\begin{proof}[Proof of Lemma \ref{ManyColoursGadget2}]
Fix $k \geq 3$ and let $r$ be large enough.
We need to construct $r$ graphs on $n=O(r^2\, (\log r)^{8k^2})$ vertices that 
are $K_{k+1}$-free, but every subset of size $n/r$ contains a $K_k$. 
Let $q$ be the largest prime power such that 
$$ q \leq 128 k (2\log r )^{4k^2} r.$$ 
Then by Bertrand's postulate, $q\geq 64 k(2\log r)^{4k^2}r$, 
and therefore, $q \geq 64 k (\log q)^{4k^2} r$ since $r$ is large enough compared to $k$. 
Consider the affine plane of order $q$. 
It has $n:= q^2$ points and $q^2 +q$ lines such that any two points lie on a unique line, 
every line contains $q$ points, 
and every point lies on $q+1$ lines. 
It is a well-known fact that affine planes exist whenever $q$ is a prime power. 
We call two lines $L$ and $L'$ in the affine plane {\em parallel} 
if $L\cap L'= \emptyset$. 
In the affine plane of order $q$, there exist $q+1$ sets of $q$ 
pairwise disjoint lines. 
%Clearly, every line can be seen as a hyperedge, and we shall switch between the view of 
%$L$ being a line in the affine hyperplane and $L$ being a hyperedge in some hypergraph. 
Let $(V,\cL)$ be a hypergraph where the vertex set $V$ is the point set of the affine 
plane of order $q$, and the hyperedges are lines of the affine plane, with 
one set of parallel lines removed. 
%obtained by removing one set of parallel lines 
%from the affine plane of order $q$. 
Then $(V,\cL)$ is a $q$-uniform hypergraph on $q^2$ vertices such that any two hyperedges 
meet in at most one vertex. 

In \cite{drr2013}, Dudek et al.~consider a random subhypergraph $(V,\cL')$ of 
$(V,\cL)$ and show that they can embed the required graph $G$ ``along the hyperedges'' 
of $(V,\cL')$. 
For our purposes, let us call a hypergraph $(V,\cH)$ {\em good} if there exists 
a graph $G$ on vertex set $V$ such that 
\begin{itemize}
\item[(i)] $K_{k+1}\not\subseteq G$, 
\item[(ii)] every subset of size $64k (\log q)^{4k^2} q$ of $V$ contains a $K_k$ in $G$, and 
\item[(iii)] any edge of $G$ lies inside a hyperedge of $\cH$, i.e.\ for every 
$e\in E(G)$ there is some $h\in \cH$ such that $e\subseteq h$. 
\end{itemize}
Clearly, by (i) and (ii) 
any such graph $G$ is $(n,r,k)$-critical, since $\frac{n}{r} = \frac{q^2}{r} > 
64k (\log q)^{4k^2} q$ by the choice of $n$ and $q$.
Though it is not explicitely stated as a lemma, the following is proven in 
Lemma 2.2 of \cite{drr2013}. 
\begin{lemma}[\cite{drr2013} Lemma $2.2^{\ast}$]\label{lem:drrGraph}
Let $(V,\cL')$ be the (random) hypergraph obtained by picking each hyperedge of 
$(V,\cL)$ with probability $\frac{\log^2q}{q}$. Then $(V,\cL')$ is {\em good} with 
probability at least $1/2-o(1)$. 
\end{lemma}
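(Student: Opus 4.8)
The plan is to exhibit, for a typical outcome of the random sub-hypergraph $(V,\cL')$, a concrete graph $G$ realising properties (i)--(iii), and to bound the probability that the construction fails. Fix once and for all a good base graph $H$ on $q$ vertices (obtained by induction on the number of vertices, or from a direct construction): a $K_{k+1}$-free graph whose $k$-independence number satisfies $\alpha_k(H)\leq (\log q)^{4k^2}\sqrt q$, as the Erd\H{o}s--Rogers bound permits. For each surviving line $L\in\cL'$ choose, independently and uniformly, a bijection $\sigma_L$ from the $q$ points of $L$ to $V(H)$, and declare two points $u,v$ adjacent in $G$ exactly when they lie on a common surviving line $L$ with $\sigma_L(u)\sigma_L(v)\in E(H)$. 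Property (iii) holds by fiat. The bijections are extra randomness, so it suffices to bound the joint probability, over $\cL'$ and the $\sigma_L$, that (i) or (ii) fails; if that probability is at most $1/2+o(1)$, then in particular $(V,\cL')$ is good with probability at least $1/2-o(1)$.

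For property (i) I would use the linearity of the affine plane together with the sparsity $p=\log^2 q/q$. Since any two lines meet in at most one point, the vertices of a clique of $G$ lying on one fixed line form a clique in a copy of $H$ and hence span no $K_{k+1}$; a copy of $K_{k+1}$ in $G$ must therefore spread its $\binom{k+1}{2}$ edges across at least two, and in fact several, distinct surviving lines. A first-moment computation bounds the expected number of such spread copies: each extra line used contributes a factor $p$ for its survival, while the uniform bijections make the required $H$-adjacencies hold with bounded probability, independently across lines. One arranges the constants so that this expectation is small enough that deleting a single edge from every surviving copy both destroys all of them and removes only a negligible fraction of the edges; the deletion preserves (iii), so property (i) holds after a bounded alteration with probability $1-o(1)$.

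Property (ii) is the crux, and I expect it to be the main obstacle. The average (surviving) line meets a set $T$ of size $64k(\log q)^{4k^2}q$ in only about $(\log q)^{4k^2}$ points, far fewer than $\alpha_k(H)$, so for a general $T$ the $K_k$'s that (ii) demands cannot be found inside a single line and must instead be \emph{spread} across $k$ pairwise-distinct surviving lines whose $\sigma$-images form a $K_k$ of $H$. The task is thus to show that the random bijections, together with the surviving lines, make $G$ quasirandom enough that \emph{every} such $T$ contains a spread $K_k$. I would do this by a second-moment/concentration estimate: bound the expected number of large $K_k$-free sets and show, using Chernoff-type control of how the surviving lines distribute over $T$, that this expectation is at most $1/2$. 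This is exactly the step where the factor $\log^2 q$ in $p$ (rather than $1/q$) supplies the slack needed to beat the union bound over the choices of $T$, and where the exponent $4k^2$ is pinned down; one must also verify that the bounded edge-deletion performed for (i) cannot create a large $K_k$-free set.

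Finally I would combine the two estimates. Property (i) fails with probability $o(1)$ and property (ii) fails with probability at most $1/2$, so by a union bound the altered graph $G$ is good -- and hence $(V,\cL')$ is good -- with probability at least $1/2-o(1)$, as required. The delicate point throughout is the interplay between the two properties: sparsifying and deleting edges to kill the spread $K_{k+1}$'s of (i) works against the abundance of spread $K_k$'s needed for (ii), and the whole argument hinges on the single choice $p=\log^2 q/q$ reconciling the two.
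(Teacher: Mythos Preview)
The paper does not prove this lemma at all. It is explicitly imported from Dudek, Retter and R\"odl: the sentence immediately preceding the statement reads ``Though it is not explicitly stated as a lemma, the following is proven in Lemma~2.2 of \cite{drr2013}.'' The lemma is then used as a black box in the proof of Lemma~\ref{ManyColoursGadget2}. So there is no in-paper argument for you to reproduce; the paper's ``proof'' is the citation.

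That said, your sketch would not reconstruct the \cite{drr2013} argument, for two reasons. First, the opening move is circular. You ``fix once and for all a good base graph $H$ on $q$ vertices'' with $\alpha_k(H)\le (\log q)^{4k^2}\sqrt q$. But the existence of such an $H$ \emph{is} the Erd\H os--Rogers upper bound $f_{k,k+1}(q)=O\bigl((\log q)^{4k^2}\sqrt q\bigr)$, and that bound is precisely what Lemma~2.2 of \cite{drr2013} is being proved in order to establish (a ``good'' hypergraph on $q^2$ points yields, via property~(ii), a witness to this bound at $n=q^2$). You wave at ``induction on the number of vertices'', but no base case or recursion is set up, and the polylogarithmic exponent would have to be tracked through any such scheme.

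Second, your plan for property~(ii) does not work even granting $H$. You correctly observe that a typical surviving line meets a set $T$ of size $64k(\log q)^{4k^2}q$ in only about $(\log q)^{4k^2}$ points, far below $\alpha_k(H)$, so no $K_k$ can be located inside a single line via $H$. You then propose to find $K_k$'s ``spread across $k$ pairwise-distinct surviving lines whose $\sigma$-images form a $K_k$ of $H$''. This phrase is not well-defined: a spread $K_k$ has its $\binom{k}{2}$ edges on up to $\binom{k}{2}$ distinct lines, each governed by its own independent bijection $\sigma_L$, so there is no single copy of $H$ in which the image sits as a clique. The rest of the paragraph (``second-moment/concentration estimate \dots\ beat the union bound'') is a hope, not an argument. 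The actual construction in \cite{drr2013} does not transplant a pre-existing Erd\H os--Rogers graph onto each line; it places a specifically designed random object on each line and locates the required $K_k$ by a different mechanism.
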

To complete the proof of the lemma 
it would be enough to find $r$ hypergraphs $\cL_1,\ldots,\cL_r$ 
which are {\em good} and satisfy that the hyperedges of different hypergraphs 
intersect in at most one vertex. 
To see this, let $G_i$ be the graph associated with hypergraph $\cL_i$. 
Then, as mentioned above, all the 
graphs $G_i$ are $(n,r,k)$-critical. 
Furthermore, they are edge-disjoint, 
since for every $i$ the edges of $G_i$ lie inside hyperedges of $\cL_i$ by (iii), 
and hyperedges of $\cL_i$ and $\cL_j$ intersect in at most one vertex (since they 
correspond to lines in the affine plane).  

To find the $r$ hypergraphs $\cL_1,\ldots,\cL_r$ 
which are {\em good}, choose a $c$-edge-colouring of $(V,\cL)$ at random, 
where $c := \frac{q}{\log^2 q}$. Note that, since $k\geq 3$ and by choice of $q$, 
$c$ satisfies $c > 4r$. 
Let $\cL_i$ be the sub-hypergraph in colour $i$ ($1\leq i \leq c$). 
Clearly, no two hypergraphs $\cL_i$ and $\cL_j$ contain the same hyperedge. 
Moreover, since hyperedges are lines in the 
affine plane, no two hyperedges intersect in more than one vertex. 
The probability that a line $\ell \in \cL$ is in $\cL_i$ is $\frac{\log^2q}{q}$. 
So $\cL_i$ has the same distribution as the random hypergraph $(V,\cL')$ in 
Lemma \ref{lem:drrGraph}. Therefore, $\cL_i$ is {\em good} with probability at least 1/4, 
provided $q$ is large enough. 
Hence, the expected number of {\em good} hypergraphs $\cL_i$ is 
at least $c/4> r$. 
So, there exists a $c$-colouring of $(V,\cL)$ such that at least $r$ of the monochromatic hypergraphs 
are {\em good}. After relabelling, we have the desired hypergraphs, finishing the proof of Lemma \ref{ManyColoursGadget2}.
\end{proof}

\medskip
\noindent
{\bf An upper bound polynomial in both $k$ and $r$\\}
Here, we prove Lemma \ref{ManyColoursGadget1}.
Let $r\geq 2, k\geq 3$. For $n \leq 8 k^6r^3$ we need to construct $r$
$(n,r,k)$-critical graphs $G_i$ on $n$ vertices
which are edge-disjoint. 
We will define incidence structures $\I_i = (\cP,\cL_i)$ on the same set of points 
such that the families of lines $\cL_i$ are disjoint for distinct $i$. 
Further, any three lines within one $\cL_i$ do not form a triangle. 
We will then, analogously to Dudek and R\"odl \cite{dr2011}, 
enrich the lines in $\cL_i$ randomly, 
and show that the resulting graphs are edge-disjoint and each of them are
$(n,r,k)$-critical with positive probability.  

\begin{proof}[Proof of Lemma \ref{ManyColoursGadget1}]
First, let us define the incidence structures $\I$. 
Let $q$ be the smallest prime power such that $k^2r \leq q$, 
and let $\F_q$ be the finite field of order $q$. 
The common vertex set of our graphs is $V := \F_q^3$, 
i.e. $n=|V| \leq 8k^6r^3$. 
For every $\lambda \in \F_q \setminus \{0\}$, we will define an incidence structure 
$\I_{\lambda} = (V,\cL_{\lambda} )$ where $\cL_{\lambda}$ is a family of lines in $ \F_q^3$. 
For $\lambda \in \F_q \setminus \{0\}$ set 
\[ M_{\lambda} := \Big\{ (1,\lambda \alpha, \lambda \alpha ^2) : \alpha \in \F_q \setminus \{0\} \Big\}. \]
We call $M_{\lambda}$ the  $\lambda$-moment curve. 
In \cite{w1991}, Wenger used the usual moment 
curve $M_1$ to construct dense $C_6$-free graphs. 
Note that for non-zero $\lambda_1 \neq \lambda_2$  
the two curves $M_{\lambda_1}$ and $M_{\lambda_2}$ do not intersect. 
An important and crucial property is that, for any $\lambda \neq 0$,  
any three vectors from $M_{\lambda}$ are linearly independent, 
that is for distinct $\alpha_1, \alpha_2, \alpha_3$,
\[ \det \left( 
	 \begin{array}{ccc}
		1 & \lambda \alpha_1 & \lambda \alpha_1^2 \\
		1 & \lambda \alpha_2 & \lambda \alpha_2^2 \\
		1 & \lambda \alpha_3 & \lambda \alpha_3^2 \end{array} \right)
	= \lambda^2 (\alpha_3-\alpha_1)(\alpha_3-\alpha_2)(\alpha_2-\alpha_1) \neq 0.\]

In general, a line in $\F_q^3$ is a set of the form $\ell_{\bs,\bv} = \{ \beta \bs + \bv  :  \beta \in \F_q \}$, 
where $\bs \in \F_q^3\setminus \{0\}$ is called the {\em slope}. 
We define 
\[ \cL_{\lambda} := \{ \ell_{\bs,\bv} : \bs \in M_{\lambda}, \bv \in \F_q^3\}; \]
that is, in the incidence structure $\I_{\lambda} = (\F_q^3, \cL_{\lambda})$ we only 
allow lines with slope vectors from the $\lambda$-moment curve. 
Clearly, $|\cL_{\lambda}| = \size{M_{\lambda}} \frac{q^3}{q} = q^2 (q-1)$ since each 
line contains $q$ points.
We establish the following properties about each structure $\I_{\lambda}$, $\lambda \neq 0$. 
\begin{itemize}
\item[(1)] Every point $v \in V$ is contained in $q-1$ lines from $\cL_{\lambda}$ and
	every line $\ell \in  \cL_{\lambda}$ contains $q$ points.
\item[(2)] Any two points lie in at most one line. 
\item[(3)] No three lines in $ \cL_{\lambda}$ intersect pairwise in three distinct points 
	(i.e.~form a triangle). 
\end{itemize}
Further, we have for $\lambda_1 \neq \lambda_2$,
\begin{itemize}
\item[(4)] $\cL_{\lambda_1} \cap \cL_{\lambda_2} = \emptyset$.
\end{itemize}
For $(1)$, note that every slope vector in $M_{\lambda}$ gives rise 
to exactly one line through a given point $v \in V$. 
The second part of $(1)$ follows from the definition of a line. 
Property $(2)$ holds because lines are affine subspaces of dimension 1 in the 
vector space $\F_q^3$. 
For $(3)$, suppose three lines in $\cL_{\lambda}$ intersect pairwise in 
three distinct points. 
Then their three slope vectors would be linearly dependent, a contradiction 
to the linear independence of any three vectors in $\cL_{\lambda}$ we established 
above. 
Property $(4)$ simply follows from $M_{\lambda_1} \cap M_{\lambda_2} = \emptyset$ for 
$\lambda_1 \neq \lambda_2$. 

Now, we are ready to define our graphs $G_1,\ldots,G_{q-1}$. 
Let $\lambda \in \F_q \setminus \{0\}$. 
We partition every line $\ell \in \cL_{\lambda}$ randomly into 
$k$ sets $L_1^{(\ell)}$, \ldots, $L_k^{(\ell)}$ each of cardinality 
$l_1:=\left\lfloor \frac{q}{k} \right\rfloor$ % \geq rs$ 
or $l_2:=\left\lceil \frac{q}{k} \right\rceil$. 
Note that  $l_1, l_2 \geq rk$.
To be precise, between all partitions of a line $\ell = \dot\bigcup_{j=1}^{k} L_j^{(\ell)}$ where 
$\size{L_1^{(\ell)}}=\dots =\size{L_{k'}^{(\ell)}} = l_1$ and 
$\size{L_{k'+1}^{(\ell)}}=\dots =\size{L_{k}^{(\ell)}}=l_2$ 
we choose one uniformly at random, choices for distinct lines in
$\cL_{\lambda}$ being independent.
The graph $G_{\lambda}$ on the vertex set $V = \F_q^3$ is defined as follows. 
For every $\ell \in \cL_{\lambda}$ and any $i \neq j$, 
we include the edges of a complete bipartite graph between the vertex sets $L_i^{(\ell)}$ and $L_j^{(\ell)}$ on $\ell$. 
That is, the graph $G_{\lambda}$ consists of a collection of Tur\'an graphs on $q$ vertices 
with $k$ parts. Each Tur\'an part ``lives'' along one of the lines $\ell \in \cL_{\lambda}$. 
By Property $(2)$, these parts are edge-disjoint. 
Further, by Property $(3)$, $G_{\lambda}$ is $K_{k+1}$-free. 
% Moreover, any $K_s \subseteq G_{\lambda}$ arises from a single line. 
Also, for distinct $\lambda \in \F_q^3$, by Property $(4)$, the graphs 
$G_{\lambda}$ are edge disjoint. 
To finish the proof, we show that for any fixed 
$\lambda\in \F_q\setminus \{ 0\}$ the graph 
$G_{\lambda}$ is $(n,r,k)$-critical with positive probability. As the choices of the $G_{\lambda}$ are done independently, there is a choice of $G_1,\ldots,G_{q-1}$ with the desired properties.

The calculations are similar to those in \cite{dr2011}. 
For a subset $W \subseteq V(G)$, let $\cA(W)$ denote the event that $G_{\lambda}[W]$ 
contains no $K_k$. 
Let $U\subseteq V(G)$ be a subset of size $|U|= \left\lfloor \frac{n}{r} \right\rfloor$. 
Then, since by Property $(3)$ any $K_k$ can only appear within a line $\ell \in \cL_{\lambda}$, 
\begin{align*}
\cA(U) &\subseteq \bigcap_{\ell \in \cL_{\lambda}} \cA(U \cap \ell), \\
\shortintertext{and therefore, since all the events $\cA(U \cap \ell)$ are independent,}
\PP( \cA(U)) &\leq  \prod_{\ell \in \cL_{\lambda}} \PP(\cA(U \cap \ell)).
\end{align*}
For a line $\ell \in \cL_{\lambda}$, 
set $u_{\ell} := |U \cap \ell|$, and 
let $\ell = \bigcup_{j=1}^{k} L_j^{(\ell)}$ be the partition we chose at random. 
Then the event $\cA(U \cap \ell)$ is equivalent to the existence of a 
$j \in [k]$ such that $U \cap L_j^{(\ell)} = \emptyset$. 
But, for fixed $j \in [k]$, 
\begin{align*} 
\PP \left(U \cap L_j^{(\ell)} = \emptyset \right) &= \frac{\binom{q-u_{\ell}}{\size{L_j^{(\ell)}}}}{\binom{q}{\size{L_j^{(\ell)}}}}
%	\leq \frac{\binom{q-l_1}{u_{\ell}}}{\binom{q}{u_{\ell}}} 
	\leq \left( 1-\frac{u_{\ell}}{q}\right)^{\size{L_j^{(\ell)}}} 
	\leq \exp \left( -\frac{l_1 u_{\ell}}{q}\right).
\end{align*}
Therefore, 
\begin{align*}
\PP( \cA(U)) &\leq  \prod_{\ell \in \cL_{\lambda}} \PP \left(\exists \, j \in [k] : U \cap L_j^{(\ell)} = \emptyset \right)\\
	&\leq k^{|\cL_{\lambda}|} \exp \left( - \sum_{\ell \in \cL_{\lambda}} \frac{l_1 u_{\ell}}{q}\right)\\
	&= k^{|\cL_{\lambda}|} \exp \left( - \frac{q-1}{q} l_1 |U| \right),
\end{align*}
since every point in $U$ belongs to exactly $q-1$ lines (Property $(1)$), and therefore 
$\sum_{\ell \in \cL_{\lambda}} u_{\ell} = \sum_{\ell \in \cL_{\lambda}} |U \cap \ell|= (q-1)|U|$.
We obtain, 
\begin{align*}
\PP \left(\exists U \in \binom{V}{\left\lfloor \frac{n}{r} \right\rfloor} : \cA(U) \right) 
	&\leq \binom{n}{\left\lfloor \frac{n}{r} \right\rfloor}\, k^{|\cL_{\lambda}|}\, 
	\exp \left(-\frac{q-1}{q} l_1 \, \left\lfloor \frac{n}{r} \right\rfloor \right)\\
	&\leq (re)^{n/r}\, k^{q^2(q-1)} \,
		\exp \left(-\frac{q-1}{q} (rk) \left\lfloor \frac{n}{r} \right\rfloor \right)\\
	&\leq \exp \left[q^3 \left( \frac{\ln r}{r} + \frac{1}{r} + \ln k - \frac{3}{4}k\right) \right]\\
	&< 1  
\end{align*}
for $k\geq 3$ and $r\geq 3$. 
Therefore, there exists an instance of $G_{\lambda}$ such that 
every subset $U$ of size at least $\left\lfloor \frac{n}{r} \right\rfloor$ contains a 
$K_k$ in $G_{\lambda}$.
\end{proof}

\section{Concluding remarks}
We have seen, as a consequence of Proposition~\ref{cor:LowerBddWithErdoesRogers} and
Theorem~\ref{fundamental}, 
that $s_r(K_k)\geq s_{r-1}(K_k)$. 
However,  it is not that clear that $s_r(K_k)$ is also increasing in $k$. 
We usually expect that graphs which are Ramsey for $K_k$ should be ``larger''
than those which are Ramsey only for $K_{k-1}$. 
It would be quite unintuitive if the following conjecture was not true.

\begin{conjecture}\label{increasing}
For all $r\geq 3$, $k\geq 3$ we have that $s_r(K_k)\geq s_r(K_{k-1})$.
\end{conjecture}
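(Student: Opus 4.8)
The plan is to route everything through Theorem~\ref{fundamental}. Since $s_r(K_k)=P_r(k-1)$ and $s_r(K_{k-1})=P_r(k-2)$, Conjecture~\ref{increasing} is equivalent to the statement that $P_r(\cdot)$ is non-decreasing; the small cases are trivial, so we may assume $k\ge 4$ and must show $P_r(m)\ge P_r(m-1)$ with $m:=k-1\ge 3$. Unravelling the definition of $P_r$, it suffices to prove the following ``derivative'' statement: from a $K_{m+1}$-free colour pattern $G_1,\dots,G_r$ on an $n$-element vertex set $V$ for which every $[r]$-colouring of $V$ contains a strongly monochromatic $K_m$, one can build a $K_m$-free colour pattern on at most $n$ vertices for which every $[r]$-colouring contains a strongly monochromatic $K_{m-1}$. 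I would try to obtain the new pattern either by restricting to the link of a vertex or by deleting edges so as to drop each colour class's clique number by one.

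First I would try the \emph{link construction}. Fix a vertex $w\in V$, write $N_i:=N_{G_i}(w)$, and set $U:=\bigcup_{i\in[r]} N_i$; since the $G_i$ are edge-disjoint the sets $N_i$ are pairwise disjoint, so $|U|=\deg_{G_1\cup\cdots\cup G_r}(w)\le n-1$. Let $H_i:=G_i[N_i]$, regarded as a graph on the common vertex set $U$. Each $H_i$ is $K_m$-free (a $K_m$ inside $N_i$ would, together with $w$, be a $K_{m+1}$ in $G_i$), and the $H_i$ are pairwise edge-disjoint, so $H_1,\dots,H_r$ is a $K_m$-free colour pattern on $U$. To force a strongly monochromatic $K_{m-1}$, take any $c\colon U\to[r]$, extend it to $\hat c\colon V\to[r]$ by giving $w$ some colour $a$ and colouring $W:=V\setminus(U\cup\{w\})$ arbitrarily, and let $Q$ be a strongly monochromatic $K_m$ produced by the old pattern, say in colour $j$. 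If $w\in Q$ then $j=a$, the remaining $k'$ vertices of $Q$ lie in $N_a\subseteq U$, receive colour $a$ under $c$, and form a strongly monochromatic $K_{m-1}$ in $H_a$, exactly as desired.

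The obstacle, and the step I expect to be the crux, is the case $w\notin Q$. Then $Q$ may lie entirely inside $U$ and be a monochromatic $K_m$ of $G_j[U]$ that is \emph{not} contained in $N_j$; it certifies a monochromatic $K_{m-1}$ in $G_j[U]$, but not necessarily one in $H_j=G_j[N_j]$, and colouring $W$ or choosing $a$ cleverly cannot reach these cliques. Making this work seems to need a vertex $w$ such that every clique of $G_j$ meeting $U$ already meets $N_j$ in $m-1$ vertices, or an iteration of the construction, and it is unclear that an extremal pattern admits such a $w$. An alternative is the \emph{edge-deletion approach}: replace each $G_i$ by a spanning $K_m$-free subgraph $G_i':=G_i\setminus D_i$. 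The new pattern still forces a strongly monochromatic $K_{m-1}$ provided that for every copy of $K_m$, $Q\subseteq G_i$, the set $D_i\cap\binom{Q}{2}$ is a non-empty star, since then $Q$ minus the centre of that star is a $K_{m-1}$ surviving in $G_i'$. The difficulty is that destroying \emph{all} copies of $K_m$ in $G_i$ can force $D_i$ to meet some individual $K_m$ in two vertex-disjoint edges (one checks that no ordering-based or greedy choice of $D_i$ avoids this in general), which leaves no $K_{m-1}$ on any $(m-1)$-subset; circumventing it appears to require genuinely new information about the clique structure of $K_{m+1}$-free graphs.

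Finally, there is a reformulation directly at the level of Ramsey-minimal graphs that might be attacked: if $G\in\cM_r(K_k)$ has $\delta(G)=s_r(K_k)$ with a minimum-degree vertex $v$ satisfying $G-v\nrightarrow (K_{k-1})_r$, then (as $G\rightarrow(K_{k-1})_r$) every $r$-Ramsey-minimal subgraph of $G$ for $K_{k-1}$ must contain $v$, whence $s_r(K_{k-1})\le\deg_G(v)=s_r(K_k)$. So it would be enough to produce, for each $r$ and $k$, \emph{some} extremal $G\in\cM_r(K_k)$ together with a minimum-degree vertex $v$ with $G-v\nrightarrow (K_{k-1})_r$, and one would try to engineer this inside the signal-sender construction used for $s_r(K_{k+1})\le P_r(k)$. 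The sticking point is again to guarantee that the gadget part, which is known to be non-Ramsey for $K_k$, is also non-Ramsey for $K_{k-1}$, and this once more reduces to the same packing/clique-structure question that I expect to be the main obstacle overall.
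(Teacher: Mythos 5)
The statement you are addressing is Conjecture~\ref{increasing}, which the paper explicitly leaves open: no proof is given there, and the authors remark that it is ``not that clear'' that $s_r(K_k)$ is increasing in $k$. So there is no paper argument to compare against; the only question is whether your attempt settles the conjecture, and it does not. Your framing is sound as far as it goes: by Theorem~\ref{fundamental} the claim for $k\geq 4$ is equivalent to $P_r(m)\geq P_r(m-1)$ for $m\geq 3$, and the case $k=3$ is trivial since $s_r(K_2)=1$. But each of the three routes you sketch stops exactly at the step that carries all the content, and in each case the obstacle you flag is genuine rather than a technicality to be smoothed over later.

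Concretely: in the link construction, the case $w\notin Q$ is not a boundary case but the generic one. The new pattern $H_1,\dots,H_r$ must force a strongly monochromatic $K_{m-1}$ under \emph{every} colouring of $U$, whereas the hypothesis only controls colourings of all of $V$, and the extension trick can only convert cliques of the old pattern that pass through $w$; in the known near-extremal patterns (e.g.\ the Tur\'an-graphs-along-lines construction of Lemma~\ref{ManyColoursGadget1} or the packing in Lemma~\ref{ManyColoursGadget2}) the monochromatic $K_m$'s are spread over many lines or blocks and no single vertex's link sees more than a vanishing fraction of them, so there is no pigeonhole or averaging reason that a suitable $w$ exists. In the edge-deletion route, the condition that each $D_i$ meet every copy of $K_m$ in $G_i$ in a non-empty star is a strong simultaneous covering requirement; $K_{m+1}$-freeness does not prevent two $K_m$'s from sharing $m-1$ vertices, these overlaps force the star choices to interact globally, and you give no construction of admissible sets $D_i$ nor a proof that they exist. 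In the third route, the needed hypothesis, namely an extremal $G\in\cM_r(K_k)$ with a minimum-degree vertex $v$ such that $G-v\nrightarrow (K_{k-1})_r$, is not available: minimality only yields $G-v\nrightarrow (K_k)_r$, being non-Ramsey for the \emph{smaller} clique is a strictly stronger property, and the paper's construction behind Theorem~\ref{fundamentalUpper} (signal senders tailored to $K_{k+1}$) gives no control over $K_k$-Ramseyness of the vertex-deleted graph; this hypothesis is essentially a restatement of the conjecture's difficulty rather than a reduction of it. So the conjecture remains open; your proposal correctly locates the crux but does not prove the statement.
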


We also saw that the Erd\H{o}s-Rogers function
is tightly connected to the study of $s_r(K_k)$. 
For our lower bounds in Section~\ref{sec:lowerbdds}, 
we essentially showed that $P_r(k)= \Omega \left(r(f_{k,k+1}(r))^2\right)$, 
provided $g_k(n)=\frac{f_{k,k+1}(n)}{\sqrt{n}}$ is any decent polylogarithmic 
function (which we believe it is).
%we heavily used the recursion $P_r\geq P_{r-1}+\max_i\alpha_{k}(G_i)$, 
%and the fact that the $G_i$ are $K_{k+1}$-free and thus {\em all} have $k$-independence number at least $f_{k,k+1}(n)$. 
On the other hand, we saw in Section~\ref{sec:packings} that the known constructions 
for $K_{k+1}$-free graphs with small $k$-independence number can be 
modified to constructions of $r$ pairwise edge-disjoint such graphs on the same or just 
slightly larger vertex set.
%, which is potentially a logarithmic factor larger than in the original construction. 
In fact, if a packing of essentially optimal $(n,r,k)$-graphs $G$, that is, those with parameters $n/r = \Theta( \alpha_k(G))  = \Theta (f_{k,k+1}(n))$, 
was possible then we would get an 
upper bound that matches our lower bounds. Indeed, then 
$\sqrt{n} = \Theta (r\cdot g_k(n)) = \Theta (r \cdot g_k(r)) = \Theta ( \sqrt{r} \cdot f_{k,k+1}(r))$, so by Lemma~\ref{lemma:critical} we would have
$$P_r(k)\leq n =  \Theta\left(r(f_{k,k+1}(r))^2\right).$$
We strongly believe the following is true.

\begin{conjecture} For every fixed $k\geq 3$,
$$s_r(K_{k}) = \Theta \left( r\cdot \left( f_{k-1,k}(r)\right)^2\right).$$
\end{conjecture}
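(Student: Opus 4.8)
The plan is to prove the conjecture in its equivalent form $P_r(k-1) = \Theta\!\left(r\,(f_{k-1,k}(r))^2\right)$, which is legitimate by Theorem~\ref{fundamental}; throughout set $j := k-1 \geq 2$ and $g_j(n) := f_{j,j+1}(n)/\sqrt{n}$. The lower bound $P_r(j) = \Omega\!\left(r\,(f_{j,j+1}(r))^2\right)$ is, up to a single regularity input, already contained in Section~\ref{sec:lowerbdds}: the recursion $P_r(j) \geq P_{r-1}(j) + f_{j,j+1}(P_r(j))$ of Proposition~\ref{cor:LowerBddWithErdoesRogers} telescopes to $P_r(j) = \Omega\!\left((r\,g_j(r))^2\right) = \Omega\!\left(r\,(f_{j,j+1}(r))^2\right)$ provided $f_{j,j+1}$ is non-decreasing, $g_j$ is slowly varying in the precise sense used there, and the lower bound one substitutes for $f_{j,j+1}$ is of the \emph{true} order of $f_{j,j+1}$. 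For $j = 2$ this last point holds, since Shearer's bound matches the Ramsey bound $R(3,m) = O(m^2/\log m)$; so the lower half of the conjecture for $k = 3$ is essentially in hand, while for $j \geq 3$ one would first have to establish $f_{j,j+1}(n) = \Theta\!\left(\sqrt{n}\,\polylog n\right)$ with a sufficiently regular polylog, itself a well-known open problem on the Erd\H{o}s--Rogers function.

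For the upper bound, by Lemma~\ref{lemma:critical} it suffices to pack $r$ pairwise edge-disjoint $(n,r,j)$-critical graphs on a common vertex set of size $n = O\!\left(r\,(f_{j,j+1}(r))^2\right)$. A \emph{single} near-optimal such graph is cheap: taking $n := C r^2 g_j(r)^2$ for a large constant $C$, slow variation gives $g_j(n) \asymp g_j(r)$, so an extremal $K_{j+1}$-free graph on $n$ vertices has $\alpha_j$-number of order $\sqrt{C}\, r\, g_j(r)^2 \ll C\, r\, g_j(r)^2 = n/r$; that is, it is $(n,r,j)$-critical with a factor $\sqrt{C}$ to spare. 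The entire difficulty is to carry out the packing while losing only a constant --- rather than a $\polylog r$ --- factor in $n$, whereas the arguments behind Lemmas~\ref{ManyColoursGadget3} and~\ref{ManyColoursGadget2} each give up a power of $\ln r$. I would attack this along three lines: a sharper probabilistic deletion argument that simultaneously sparsifies $K_n$ into $r$ subgraphs with near-optimal $j$-independence numbers, pushing the Lov\'asz Local Lemma estimate of Lemma~\ref{lem:LLL} to the edge of feasibility; a bespoke algebraic construction producing $r$ pairwise edge-disjoint near-extremal graphs by design, in the spirit of the $\lambda$-moment-curve families $\cL_\lambda$ of Lemma~\ref{ManyColoursGadget1} but with the optimal $n = \Theta\!\left(r\,(f_{j,j+1}(r))^2\right)$ in place of $O(k^6 r^3)$; or a large group action rotating one extremal Erd\H{o}s--Rogers graph into $r$ edge-disjoint translates inside a vertex set only a constant factor larger.

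The main obstacle will be precisely this lossless packing, and I expect it to be genuinely hard. Even for $j = 2$, where the extremal $K_3$-free graphs are known to be near-regular of degree $\Theta(\sqrt{n\log n})$, a packing of $r = \Theta(\sqrt{n/\log n})$ of them has total edge count $\Theta(n^2)$ --- a positive constant fraction of all of $\binom{n}{2}$ --- so there is only a constant factor of room, and a plain union bound or a Local-Lemma deletion argument of the type in Lemma~\ref{lem:LLL} does not obviously survive it. Concretely, the statement needed for $k = 3$ is: partition (most of) the edges of $K_n$ into $r = \Theta\!\left(\sqrt{n/\log n}\right)$ triangle-free graphs each of independence number $O\!\left(\sqrt{n\log n}\right)$ --- a ``simultaneous'' optimal Ramsey construction for $R(3,\cdot)$, which already appears out of reach of current methods. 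Any progress on it would, via Lemma~\ref{lemma:critical} and Theorem~\ref{fundamental}, immediately close the remaining $\ln r$ gap in Theorem~\ref{ManyColoursTriangle} and settle the conjecture for $k = 3$; the general-$k$ case would in addition require the analogous regularity input on $f_{k-1,k}$ described above.
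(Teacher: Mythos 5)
The statement you are addressing is stated in the paper as a \emph{conjecture}, and the paper offers no proof of it --- only the heuristic discussion in the concluding remarks, which your plan essentially reproduces: the lower bound via the recursion of Proposition~\ref{cor:LowerBddWithErdoesRogers} (conditional on $g_{k-1}(n)=f_{k-1,k}(n)/\sqrt{n}$ being a sufficiently regular polylogarithmic function), and the upper bound via Lemma~\ref{lemma:critical} together with a hypothetical packing of near-optimal $(n,r,k-1)$-critical graphs losing only a constant factor. Your proposal is therefore a research programme, not a proof, and you concede as much: neither half of the $\Theta$ is actually established by any step you carry out.

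Concretely, two gaps remain. For the lower bound, the induction in Section~\ref{sec:lowerbdds} only yields $P_r(k-1)=\Omega\left(r\,h(r)^2\right)$ for whatever lower bound $h\leq f_{k-1,k}$ one feeds into it; to get the conjectured $\Omega\left(r\,(f_{k-1,k}(r))^2\right)$ one needs $f_{k-1,k}(n)=\Theta\left(\sqrt{n}\,\polylog n\right)$ with the stated regularity, which is known only for $k=3$ (Shearer's bound matching $R(3,m)=\Theta(m^2/\ln m)$) and is open for all $k\geq 4$ --- the current gap between the Dudek--Mubayi lower bound and the Dudek--Retter--R\"odl upper bound is a polylogarithmic factor whose exponent depends on $k$. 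For the upper bound, the constant-loss packing is the whole difficulty even for $k=3$: the route via Lemma~\ref{lem:LLL} cannot work in principle, since a Local-Lemma deletion argument produces triangle-free graphs with independence number $O(\sqrt{n}\ln n)$ (Spencer-type), not $O(\sqrt{n\ln n})$ (Kim-type), so it is off by a $\sqrt{\ln n}$ factor already for a single graph, and your other two suggestions (an algebraic family \`a la $\cL_\lambda$ with optimal parameters, or a group action rotating one extremal graph into $r$ edge-disjoint copies) are not constructed. As you note, what is needed for $k=3$ is a simultaneous optimal $R(3,\cdot)$ construction --- packing $\Theta\left(\sqrt{n/\ln n}\right)$ triangle-free graphs of independence number $O\left(\sqrt{n\ln n}\right)$ into $K_n$, using a constant fraction of all edges --- and this is exactly the open problem the paper points to (e.g.\ via an iterated triangle-free process); no current method closes it. So the proposal correctly identifies the reduction and the obstacles, but proves nothing beyond what Theorems~\ref{ManyColoursTriangle}--\ref{ManyColoursGeneralWeak} already give.
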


Therefore, we believe that tightening the known bounds on $f_{k-1,k}(n)$ 
will directly contribute to tightening the bounds on $s_r(K_k)$. 
The currently best known bounds~\cite{drr2013} on the Erd\H{o}s-Rogers function are 
$$ \Omega \left(\sqrt{\frac{n \ln n}{\ln\ln n}}\right)
	= f_{k,k+1}(n)
	= O\left(  (\ln n)^{4k^2} \sqrt{n} \right),$$ 
so it is not yet clear how strongly the logarithmic factor depends on $k$.
%That is, $f_{k,k+1}(n)$ is of the order of $n^{1/2+o(1)}$. 
We wonder whether the upper bound can be strengthened in the following way. 
\begin{question}
Does there exist a universal constant $C$ (independent of $k$) 
such that 
$f_{k,k+1}(n) = O\left((\ln n)^{C}\sqrt{n}\right)$? 
And does the construction of such a $K_{k+1}$-free graph on $n$ vertices 
with $k$-independence number less than $O\left((\ln n)^{C}\sqrt{n} \right)$ 
generalise to a packing of such graphs?
%into $K_{n'}$ where $n'=\Theta(n)$?
\end{question}
A positive answer to both questions would imply that there is a universal 
constant $C>0$ such that $s_r(K_k)= O(r^2 (\ln r)^{C})$.

In the special case of $K_3$, in the proof of Lemma~\ref{ManyColoursGadget3}  
we iteratively applied the  Local Lemma to 
find edge-disjoint triangle-free subgraphs $G_i\subseteq K_n$ 
with independence number  $O(\sqrt{n}\ln n)$
%This way (and using the power of Theorem \ref{bel}) we proved 
%the upper bound of order $O \left(r^2 (\ln r)^2\right)$ in 
and this implied our upper bound in Theorem \ref{ManyColoursTriangle}. 
This approach was an adaptation of the classical application of the Local Lemma
by Spencer~\cite{s1977} to lower bound off-diagonal Ramsey numbers and
obtain $R(3,k)\geq c \left(k/\ln k\right)^2$.
%who
%simplified an earlier proof of Erd\H{o}s~\cite{Erd61} of the lower bound 
%$ on the off-diagonal Ramsey numbers 
%(applying the Local Lemma only once gives a triangle-free graph 
%on $n$ vertices with independence number  $O(\sqrt{n}\ln n)$, 
%by applying it iteratively %but we needed the extra $\log$-factor to 
%we showed that we can actually pack these 
%graphs we find with the Local Lemma). 
Subsequently Kim~\cite{k1995} proved the existence of a triangle-free graph $G$ on $n$ 
vertices with independence number $O \left(\sqrt{n \ln n}\right)$, hence
establishing that correct order of magnitude of $R(3,k)$ is $k^2/\ln k$. 
Earlier Bollob\'as and Erd\H{o}s suggested an alternative approach to the problem
of finding better lower bounds on $R(3,k)$: the triangle-free process.
In 2009, Bohman \cite{b2009} managed to reprove Kim's theorem using the triangle-free 
process. Very recently, 
Fiz Pontiveros, Griffiths and Morris \cite{fgm2013}, and independently Bohman and Keevash \cite{bk2013}, improved the constant factor in the analysis
and showed that $R(3,k) \geq (1/4 - o(1)) k^2 / \ln k$. 
We are optimistic that one can apply the triangle-free process iteratively, 
with some modifications, and thus find not only one, but
a packing of triangle-free graphs $G_1,\ldots,G_r$ on $n$ vertices, all  
having independence number $O(\sqrt{n\ln n})$. Thus, we conjecture that 
our lower bound on $s_r(K_3)$ is tight.
\begin{conjecture}
$s_r(K_3) = \Theta \left(r^2 \ln r\right)$. 
\end{conjecture}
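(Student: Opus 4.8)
Since the lower bound $s_r(K_3) \geq c\, r^2 \ln r$ is already supplied by Theorem~\ref{ManyColoursTriangle}, the content of the conjecture is the matching upper bound $s_r(K_3) = O(r^2 \ln r)$, i.e. the removal of one factor of $\ln r$ from Lemma~\ref{ManyColoursGadget3}. By Theorem~\ref{fundamental} we have $s_r(K_3) = P_r(2)$, and by Lemma~\ref{lemma:critical} it suffices to produce, for some $n = C r^2 \ln r$, a colour pattern $G_1, \ldots, G_r$ on $[n]$ in which every $G_i$ is $(n,r,2)$-critical, that is, triangle-free with $\alpha(G_i) < n/r$. For such $n$ one has $\ln n = (2+o(1))\ln r$ and $n/r = C r \ln r$, while a single triangle-free-process graph on $n$ vertices has independence number $(1+o(1))\sqrt{2\, n \ln n} = (2\sqrt{C}+o(1))\, r \ln r$. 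Thus the ratio between the independence number we can hope for and the target $n/r$ is $O(1/\sqrt C)$, which is below $1$ once $C$ is a large absolute constant; the extra $\sqrt{\ln n}$ lost by the Local Lemma in Lemma~\ref{ManyColoursGadget3} is exactly what the triangle-free process of \cite{b2009, fgm2013, bk2013} recovers.

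The plan is to build the pattern greedily. Put $H_1 := K_n$; having chosen pairwise edge-disjoint $G_1, \ldots, G_{i-1}$, let $H_i := K_n \setminus (E(G_1) \cup \cdots \cup E(G_{i-1}))$ and run a triangle-free process inside the host $H_i$: starting from the empty graph, repeatedly add a uniformly random edge of $H_i$ whose addition creates no triangle, until no such edge remains, and call the outcome $G_i$. By construction the $G_i$ are triangle-free and pairwise edge-disjoint, so everything reduces to the bound $\alpha(G_i) < n/r$. The edge budget is comfortable: each process uses $(1+o(1))\tfrac{1}{2\sqrt{2}}\, n^{3/2}\sqrt{\ln n}$ edges, so after all $r$ rounds the number of deleted edges is $O\big(r\, n^{3/2}\sqrt{\ln n}\big)$, a fraction $O(1/\sqrt{C})$ of $\binom{n}{2}$. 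Consequently every host $H_i$ has minimum degree at least $(1 - O(1/\sqrt{C}))\, n$, and---being the complement of at most $r$ triangle-free-process graphs---it is pseudorandom, in that every vertex retains a $(1 - O(1/\sqrt C))$-fraction of its potential neighbours and the deleted edges are spread nearly uniformly across all linear-sized vertex subsets.

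The crux, and the main obstacle, is the independence-number analysis on these non-complete hosts. The optimal bound $\alpha = O(\sqrt{n \ln n})$ for the triangle-free process is proved in \cite{fgm2013, bk2013} by a delicate differential-equations and martingale argument that tracks, for every not-yet-destroyed independent set, the number of ``open'' edges whose addition would preserve triangle-freeness, and shows these counts stay concentrated on explicitly computed trajectories---an argument that uses the completeness of the host in an essential way. What is needed here is the same conclusion when the host is the slightly sparser pseudorandom graph $H_i$. I would attempt this in one of two ways: (a) reprove the trajectory estimates of \cite{fgm2013, bk2013} carrying the host minimum-degree parameter $(1 - O(1/\sqrt C))n$ through each equation, so that the leading constants are merely rescaled by $1 + O(1/\sqrt C)$; or (b) isolate a black-box robustness statement---that the triangle-free process on any host with minimum degree $(1-\delta)n$ and good codegree control terminates with independence number $O(\sqrt{n\ln n})$---and apply it to each $H_i$. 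Option (b) is cleaner but is not in the literature, while option (a) is lengthy but in principle routine. The genuinely delicate point is controlling the accumulation of dependence across the $r$ successive processes: one must ensure that deleting $G_1, \ldots, G_{i-1}$ leaves $H_i$ pseudorandom enough for the analysis of round $i$ to apply, uniformly over all $i \leq r$. Granting this, one fixes $C$ large enough that the resulting bound $\alpha(G_i) \leq (2\sqrt{C} + o(1))\, r \ln r$ falls below $n/r = C r \ln r$, which holds with room to spare since $2\sqrt{C}/C \to 0$; by Lemma~\ref{lemma:critical} this yields $P_r(2) \leq n = O(r^2 \ln r)$ and hence the conjecture.
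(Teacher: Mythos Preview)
The statement you are addressing is a \emph{conjecture} in the paper, not a theorem; the paper offers no proof. In the concluding remarks the authors say only that they are ``optimistic that one can apply the triangle-free process iteratively, with some modifications'' to pack $r$ triangle-free graphs $G_1,\ldots,G_r$ on $n$ vertices each with independence number $O(\sqrt{n\ln n})$, and on that basis they state the conjecture. So there is no paper proof to compare against.

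Your proposal is essentially a fleshed-out version of exactly this heuristic: reduce via Theorem~\ref{fundamental} and Lemma~\ref{lemma:critical} to a packing problem, then run the triangle-free process iteratively on the successive hosts $H_i$. That framing is correct, and your edge-count arithmetic showing the hosts remain dense is fine. But you are candid that the heart of the matter---proving that the triangle-free process on a slightly sparse, pseudorandom host still terminates with independence number $O(\sqrt{n\ln n})$, and that this survives $r$ iterations with controlled dependence---is not done. Neither your option~(a) nor~(b) is carried out, and calling the adaptation of the \cite{fgm2013,bk2013} trajectory analysis ``in principle routine'' understates the difficulty: that analysis is long and intricate even on $K_n$, and no robust-host version exists in the literature. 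What you have written is a plausible and well-motivated plan of attack that coincides with the authors' own intuition, but it is not a proof, and the conjecture remains open.
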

%%
%%%%As we saw in Section \ref{sec:lowerbbds}, 
%good lower %and upper 
%bounds in the special case 
%when $s=k-1$ are extremely valuable for the study of $s_r(K_k)$ 
%(c.f. Lemma \ref{lem:lowerBoundGeneral}).  
%In fact, Theorem \ref{thm:bh1991} constitutes the first non-trivial lower bound
%on $f_{k-1,k}(n)$. 
%Subsequently, this lower bound on $f_{k-1,k}(n)$ was 
%improved by Krivelevich \cite{k1994}. 
%Recently, Dudek and Mubayi \cite{dm2012} remarked that the result in 
%\cite{k1994} can be strengthened to 
%$\Omega \left(\sqrt{\frac{n \log n}{\log \log n}}\right)= f_{k-1,k}(n)$, 
%using a result of Shearer \cite{s1995}. 

\noindent {\bf Acknowledgements.} We thank Tom Bohman for helpful discussions. 

\bibliographystyle{abbrv}
\bibliography{referencesAll}

\begin{thebibliography}{10}

\bibitem{aks1980}
M.~Ajtai, J.~Koml{\'o}s, and E.~Szemer{\'e}di.
\newblock A note on {R}amsey numbers.
\newblock {\em J. Combin. Theory Ser. A}, {\bf 29}:354--360, 1980.

\bibitem{as2008}
N.~Alon and J.~Spencer.
\newblock {\em The probabilistic method}.
\newblock Wiley-Interscience, 2008.

\bibitem{b2009}
T.~Bohman.
\newblock The triangle-free process.
\newblock {\em Adv. Math.}, {\bf 221}:1653--1677, 2009.

\bibitem{bk2013}
T.~Bohman and P.~Keevash.
\newblock Dynamic concentration of the triangle-free process.
\newblock {\em preprint}, 2013.
\newblock available at \texttt{http://arxiv.org/pdf/1302.5963v1}.

\bibitem{bh1991}
B.~Bollob{\'a}s and H.~Hind.
\newblock Graphs without large triangle free subgraphs.
\newblock {\em Discrete Math.}, {\bf 87}:119--131, 1991.

\bibitem{bel1976}
S.~Burr, P.~Erd\H{o}s, and L.~Lov{\'a}sz.
\newblock On graphs of {R}amsey type.
\newblock {\em Ars Combin.}, {\bf 1}:167--190, 1976.

\bibitem{bnr1984}
S.~A. Burr, J.~Ne{\v{s}}et{\v{r}}il, and V.~R{\"o}dl.
\newblock On the use of senders in generalized {R}amsey theory for graphs.
\newblock {\em Discrete Math.}, {\bf 54}:1--13, 1985.

\bibitem{C09}
D.~Conlon.
\newblock A new upper bound for diagonal {R}amsey numbers.
\newblock {\em Ann. of Math.}, {\bf 170}:941--960, 2009.

\bibitem{dm2012}
A.~Dudek and D.~Mubayi.
\newblock On generalized {R}amsey numbers for 3-uniform hypergraphs.
\newblock {\em J. Graph Theory}, {\bf 76}:217--223, 2013.

\bibitem{drr2013}
A.~Dudek, T.~Retter, and V.~R{\"o}dl.
\newblock On generalized {R}amsey numbers of {Erd\H{o}s} and {R}ogers.
\newblock {\em Journal of Combinatorial Theory, Series B}, 109:213--227, 2014.

\bibitem{dr2011}
A.~Dudek and V.~R{\"o}dl.
\newblock On {$K_s$-free subgraphs in $K_{s+k}$-free} graphs and vertex
  {F}olkman numbers.
\newblock {\em Combinatorica}, {\bf 31}:39--53, 2011.

\bibitem{er1962}
P.~Erd\H{o}s and C.~Rogers.
\newblock The construction of certain graphs.
\newblock {\em Canad. J. Math}, {\bf 14}:702--707, 1962.

\bibitem{E47}
P.~Erd{\H o}s.
\newblock Some remarks on the theory of graphs.
\newblock {\em Bull. Amer. Math. Soc.}, {\bf 53}:292--294, 1947.

\bibitem{ES35}
P.~Erd{\H o}s and G.~Szekeres.
\newblock A combinatorial problem in geometry.
\newblock {\em Compositio Math.}, {\bf 2}:463--470, 1935.

\bibitem{fgm2013}
G.~Fiz~Pontiveros, S.~Griffiths, and R.~Morris.
\newblock The triangle-free process and {$R(3,k)$}.
\newblock {\em preprint}, 2013.
\newblock available at \texttt{http://arxiv.org/pdf/1302.6279v1}.

\bibitem{fl2006}
J.~Fox and K.~Lin.
\newblock The minimum degree of {R}amsey-minimal graphs.
\newblock {\em J. Graph Theory}, {\bf 54}:167--177, 2006.

\bibitem{k1995}
J.~H. Kim.
\newblock The {Ramsey number $R(3,t)$} has order of magnitude $t^2/\log t$.
\newblock {\em Random Structures Algorithms}, {\bf 7}:173--207, 1995.

\bibitem{k1994}
M.~Krivelevich.
\newblock {$K_s$-free graphs without large $K_r$-free subgraphs}.
\newblock {\em Combin. Probab. Comput.}, {\bf 3}:349--354, 1994.

\bibitem{R30}
F.~P. Ramsey.
\newblock On a problem of formal logic.
\newblock {\em Proc. London Math. Soc.}, {\bf 30}:264--286, 1930.

\bibitem{rs2008}
V.~R{\"o}dl and M.~Siggers.
\newblock On {R}amsey minimal graphs.
\newblock {\em SIAM J. Discrete Math.}, {\bf 22}:467--488, 2008.

\bibitem{s1983}
J.~B. Shearer.
\newblock A {note on the independence number} of triangle-free graphs.
\newblock {\em Discrete Math.}, {\bf 46}:83--87, 1983.

\bibitem{s1995}
J.~B. Shearer.
\newblock On the independence number of sparse graphs.
\newblock {\em Random Structures Algorithms}, {\bf 7}:269--271, 1995.

\bibitem{Spe75}
J.~Spencer.
\newblock Ramsey's theorem -- a new lower bound.
\newblock {\em J. Combin. Theory Ser. A}, {\bf 18}:108--115, 1975.

\bibitem{s1977}
J.~Spencer.
\newblock Asymptotic lower bounds for {R}amsey functions.
\newblock {\em Discrete Math.}, {\bf 20}:69--76, 1977.

\bibitem{w1997}
H.~Wan.
\newblock Upper bounds for {Ramsey numbers $R(3,3,...,3)$ and Schur numbers}.
\newblock {\em J. Graph Theory}, {\bf 26}:119--122, 1997.

\bibitem{w1991}
R.~Wenger.
\newblock Extremal graphs with no {$C_4$'s}, {$C_6$'s}, or {$C_{10}$'s}.
\newblock {\em J. Combin. Theory Ser. B}, {\bf 52}:113--116, 1991.

\bibitem{xxer2004}
X.~Xiaodong, X.~Zheng, G.~Exoo, and S.~Radziszowski.
\newblock Constructive lower bounds on classical multicolor {R}amsey numbers.
\newblock {\em Electron. J. Combin.}, {\bf 11}:R35, 2004.

\end{thebibliography}

\end{document}